\newcommand\closure{\operatorname{cl}}
\newcommand\intersection{\cap}
\newcommand\hil{\operatorname{Hil}}
\newcommand\hildist{\hil}
\newcommand\funk{\operatorname{Funk}}
\newcommand\after{\circ}
\newcommand{\myexp}{\mathop{}\mathopen{}\mathrm{\exp}}
\newcommand\union{\cup}
\newcommand\R{\mathbb{R}}
\newcommand{\Rplus}{\mathbb{R}_+}
\newcommand\N{\mathbb{N}}
\newcommand\coll{\operatorname{Coll}}
\newcommand\isom{\operatorname{Isom}}
\newcommand\revboundary{\mathcal{B}_\text{RF}}
\newcommand\detourrev{H_\text{RF}}
\newcommand\deltarev{\delta_\text{RF}}
\newcommand\deltafunk{\delta_\text{F}}
\newcommand\deltahil{\delta_\text{H}}
\newcommand\hrev{r}
\newcommand\rev{\operatorname{RFunk}}
\newcommand\gauge[3]{M({#2}/{#3};{#1})}
\newcommand\M[3]{M({#1}/{#2};C)}
\newcommand\interior{\operatorname{int}}
\newcommand\PGL{\operatorname{PGL}}
\theoremstyle{definition} 
\newtheorem{definition}{Definition}[section]
\theoremstyle{plain}      
\newtheorem{proposition}[definition]{Proposition}
\newtheorem{theorem}[definition]{Theorem}
\newtheorem{corollary}[definition]{Corollary}
\newtheorem{lemma}[definition]{Lemma}
\begin{document}

\title[Horofunction boundary and isometry group of Hilbert geometry]
{The horofunction boundary and isometry group of the Hilbert geometry}
\author{Cormac Walsh
}
\thanks{Work partially supported by the joint RFBR-CNRS grant number
05-01-02807}

\address{INRIA \& CMAP, \'{E}cole Polytechnique\\ 
91128 Palaiseau \\
France.
}
\email{cormac.walsh@inria.fr}

\begin{abstract}
The horofunction boundary is a means of compactifying metric spaces that was
introduced by Gromov in the 1970s.
We describe explicitly the horofunction boundary of the Hilbert
geometry, and sketch how it may be used to study the isometry group of this
space.
\end{abstract}

\maketitle




\section{Introduction}

Building on the work of Busemann, Gromov defined
in~\cite{gromov:hyperbolicmanifolds}
a certain compactification of a metric space. Recall that the Busemann
function\index{Busemann function} of a geodesic ray $\gamma$ is the
limiting function
\begin{align*}
B_\gamma(\cdot) := \lim_{t\to\infty} d(\cdot,\gamma(t))-d(b,\gamma(t)),
\end{align*}
where $b$ is an arbitrary base-point.
Gromov's idea was to generalise this notion by
considering all possible limits of $d(\cdot,z)-d(b,z)$ as $z$ heads off to
infinity in the metric space, not necessarily along a geodesic ray.
Such a limit is called a horofunction,\index{horofunction}
and may be thought of as a
``point at infinity'' of the metric space. By adjoining the horofunctions
to the metric space, under mild conditions, one compactifies it.
The set of horofunctions thus forms a ``boundary at infinity'' of the
metric space.

The attractiveness of this boundary arises in part from its generality:
whereas the definition of other boundaries, such as the hyperbolic boundary
of a Gromov hyperbolic
space, or the ideal boundary of a CAT(0) space, require the metric space to
have certain properties, the horofunction boundary exists for any metric space.

The horofunction boundary is often closely related to these other boundaries
however. For example, in the CAT(0) case it is actually the
same~\cite{ballmann:spaces}, and in the Gromov hyperbolic case, the hyperbolic
boundary can be recovered from the horofunction boundary by quotienting out
a certain equivalence relation~\cite{coornaert_papadopoulos_horofunctions,
winweb_hyperbolic,storm_barycenter}.

The horofunction boundary has found diverse applications.
Rieffel~\cite{rieffel_group} has used it in his work on quantum metric spaces.
It also appears to be the right general setting for Patterson--Sullivan
measures~\cite{burger_mozes_commensurators}.

In this chapter, we will consider the horofunction boundary of the
Hilbert\index{Hilbert metric}\index{metric!Hilbert} geometry.
Let $D$ be a bounded open convex subset of a
finite-dimensional real linear space. Given a pair of distinct points $x$
and $y$ in $D$, let $w$ and $z$ be points in the usual boundary of $D$
such that $w$, $x$, $y$, and $z$ lie in this order along a straight line;
see Figure~\ref{fig:hilbert_def}.
The Hilbert distance between $x$ and $y$ is defined to be the logarithm of the
cross ratio of these four points:
\begin{align*}
\hil(x,y) := \log\frac{|zx|}{|zy|}\frac{|wy|}{|wx|},
\end{align*}
where $|\cdot|$ denotes any norm on the linear space.
(Be aware that some authors use a different convention here
and divide by a factor of $2$.)
\begin{figure}
\centering
\input{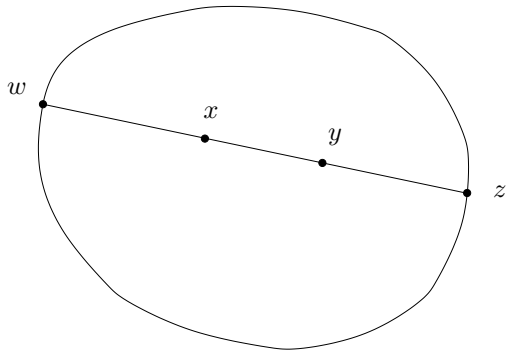}
\caption{Definition of the Hilbert distance.}
\label{fig:hilbert_def}
\end{figure}

The horofunction boundary of this metric was first studied by
Karlsson, Metz, and Noskov~\cite{karl_metz_nosk_horoballs}, who determined
it in the case where $D$ is an open simplex.
The general case was considered in~\cite{walsh_hilbert}. In that paper,
use was made of the fact that the Hilbert metric is the symmetrisation
of the Funk metric:\index{Funk metric}\index{metric!Funk}
$\hil(x,y) = \funk(x,y) + \rev(x,y),$
where
\begin{align*}
\funk(x,y) &:= \log\frac{|zx|}{|zy|},
\qquad\text{and} \\
\rev(x,y) &:= \funk(y,x) = \log\frac{|wy|}{|wx|}.
\end{align*}
The horofunction boundaries of the Funk and
reverse-Funk\index{reverse-Funk metric}\index{metric!reverse-Funk}
geometries were studied separately, and the results were combined
to find the horofunction boundary of the Hilbert geometry.

It turns out that the horofunction boundary of the reverse-Funk geometry
is just the usual boundary of the convex domain $D$. That of the Funk geometry,
however, is more closely related to the usual boundary of the polar body
$D^\circ$ of the domain.
We describe these boundaries in detail in
Section~\ref{sec:hilbert_horoboundary},
and say how they combine to form the Hilbert-geometry horofunction boundary.

Observe that, since every collineation preserves cross-ratios, any
collineation that leaves the domain $D$ invariant is an isometry of the
Hilbert metric on~$D$. There are examples known,
however,~\cite{delaharpe, molnar_thompson_isometries, lemmens_walsh_polyhedral, 
bosche} where there exist other isometries as well, namely, cross-sections
of non-Lorentzian symmetric cones. For these domains, the projective action
of Vinberg's $*$-map, or equivalently, the inverse map in the associated
Jordan algebra, is an isometry but not a collineation, and the isometry group
is generated by this map and the collineations.
It was proved in~\cite{walsh_gauge} that there are no other
examples.

The horofunction boundary is useful for studying isometry groups.
This is because such groups have a natural action by homeomorphisms on the
boundary, and often this action is easier to understand than the action on the
space itself. The horofunction boundary was used in this way
in~\cite{walsh_stretch} to determine the group of isometries of Teichm\"uller
space with Thurston's metric.
It was also used in~\cite{lemmens_walsh_polyhedral}
to determine the isometry group of the Hilbert geometry when the domain
$D$ is polyhedral.
In Section~\ref{sec:hilbert_isometries}, we discuss in more detail the isometry
group of the Hilbert geometry and how the horofunction boundary may be used
to study it.

Before that, however, we recall in the next section the definition of the
horofunction boundary and some of its properties.
To visualize horofunctions, it is often useful to consider their
horoballs,\index{horoball} that is, their sublevel sets.
Intuitively, a horoball looks like a large ball of the metric whose center
is at infinity.
We will make this precise by showing that a sequence of points converges to a
horofunction if and only if sequences of closed balls of the right size
with those points
as centers converge to horoballs of the horofunction, in a certain topology.

\section{The horofunction boundary}
\label{sec:horoboundary}

\newcommand\iso{f}
\newcommand\mapclass{h}
\newcommand\distfn{\psi}
\newcommand\dist{d}
\newcommand\symdist{d_{\text{sym}}}
\newcommand\geo{\gamma}
\newcommand\horofunction{h}
\newcommand\isometric{\cong}

Since we will be dealing with non-reversible metrics, we will discuss the
horofunction boundary in this setting. The development will be similar
to that in~\cite{walsh_stretch}.

Let $(X,\dist)$ be a non-reversible metric space, for example the Funk or
reverse-Funk geometry. We consider $X$ to be equipped with the topology
coming from the symmetrised metric $\symdist(x,y):=\dist(x,y)+\dist(y,x)$,
which for the two examples just mentioned is the Hilbert metric.

Associate to each point $z\in X$ the function $\distfn_z\colon X\to \R$,
\begin{equation*}
\distfn_z(x) := \dist(x,z)-\dist(b,z),
\end{equation*}
where $b\in X$ is an arbitrary fixed base-point. One can show that the map
$\distfn\colon X\to C(X),\, z\mapsto \distfn_z$ is injective and continuous.
Here, $C(X)$ is the space of continuous real-valued functions
on $X$, with the topology of uniform convergence on bounded sets of $\symdist$.
We define the \emph{horofunction boundary}\index{horofunction boundary}%
\index{boundary!horofunction} of $(X,\dist)$ to be
\begin{align*}
X(\infty):=\big(\closure\distfn(X)\big)\backslash\distfn(X),
\end{align*}
where $\closure$ is the topological closure operator. The elements of this
set are the \emph{horofunctions}\index{horofunction} of $(X,\dist)$.

Although this definition appears to depend on the choice of base-point,
one may verify that horofunction boundaries coming from different base-points
are homeomorphic, and that corresponding horofunctions differ by only an
additive constant.

Recall that a geodesic in a non-reversible metric space $(X,\dist)$ is 
a map $\gamma$ from a closed interval of $\R$ to $X$ such that
$\dist(\gamma(s),\gamma(t)) = t-s$, for all $s$  and $t$ in the domain
satisfying $s<t$. We make the following assumptions.
\renewcommand{\theenumi}{\Roman{enumi}}
\begin{enumerate}
\item
\label{assump1}
the metric $\symdist$ is proper, that is, its closed balls are compact;
\item
\label{assump2}
there exists a geodesic in $(X,\dist)$ between any two given points;
\item
\label{assump3}
for any point $x$ and sequence $x_n$ in $X$, we have $\dist(x_n,x)\to 0$
if and only if $\dist(x,x_n) \to 0$.
\end{enumerate}
Both the Funk and reverse-Funk metrics satisfy these assumptions.
Indeed, for~(\ref{assump2}), one may take the straight line segment between the
two points, parameterised appropriately.

Assumption~(\ref{assump1}) implies that uniform convergence on bounded sets
is equivalent to uniform convergence on compact sets.
Moreover, for elements of $\closure\distfn(X)$, the latter is equivalent
to pointwise convergence, by the Ascoli--Arzel\`a theorem, since elements
of this set are equi-Lipschitzian with respect to $\symdist$.
Again from the Ascoli--Arzel\`a theorem, the set $\closure\distfn(X)$
is compact. We call it the \emph{horofunction compactification}%
\index{horofunction compactification}\index{compactification!horofunction}.

Under assumptions~(\ref{assump1}),~(\ref{assump2}), and~(\ref{assump3}),
one may show that $\distfn$ is an embedding of $X$ into $C(X)$,
in other words, that it is a homeomorphism from $X$ to its image in $C(X)$.
From now on we identify $X$ with its image.

\subsection{Busemann points}

Define an \emph{almost-geodesic}\index{almost-geodesic} to be a path
$\gamma\colon T\to X$ such that $T\subset\Rplus$ contains $0$ and does not
contain $\sup T$, and such that, for any $\epsilon>0$,
\begin{equation*}
\big|\dist(\gamma(0),\gamma(s))+\dist(\gamma(s),\gamma(t))-t\big|<\epsilon,
\end{equation*}
for $s, t\in T$ large enough, with $s\le t$.
This definition has been modified from the original in~\cite{rieffel_group},
where $T$ was required to be an unbounded subset of $\Rplus$ containing $0$.
We must relax the condition that $T$ be unbounded because,
in the reverse-Funk metric, one may approach
the boundary along a path of bounded length.
See Figure~\ref{fig:finite_dist_to_boundary} for an illustration.
This modification is very minor.
\begin{figure}
\centering
\input{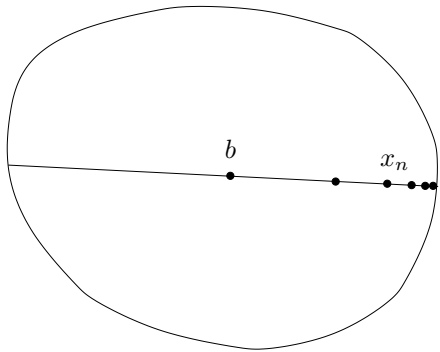}
\caption{Finite distance to the boundary.
$\funk(x_n,b)$ and $\rev(b,x_n)$ are bounded above, even though the sequence
$x_n$ approaches the boundary.}
\label{fig:finite_dist_to_boundary}
\end{figure}

Rieffel proved that every almost-geodesic converges. In his setting, it will
be to a horofunction, but in ours it may be to a point of $X$.
Every point of $X$ is trivially the limit of an almost-geodesic under
our definition. We say that a horofunction is a
\emph{Busemann point}\index{Busemann point}
if it is the limit of an almost-geodesic. Non-Busemann points of the
horofunction boundary have been found for normed spaces~\cite{walsh_normed}
and various finitely-generated groups with their word
metrics~\cite{walsh_artin,walsh_nilpotent,winweb_busemann}.

We define the \emph{detour cost}\index{detour cost}
for any two horofunctions $\xi$ and $\eta$ in $X(\infty)$ to be
\begin{align*}
H(\xi,\eta)
   &= \sup_{W\ni\xi} \inf_{x\in W\intersection X} \Big( d(b,x)+\eta(x) \Big),
\end{align*}
where the supremum is taken over all neighbourhoods $W$ of $\xi$ in
$X\cup X(\infty)$.
This concept appears in~\cite{AGW-m}. An equivalent definition is
\begin{align*}
H(\xi,\eta) &= \inf_{\gamma} \liminf_{t\to\sup T}
                 \Big( d(b,\gamma(t))+\eta(\gamma(t)) \Big),
\end{align*}
where the infimum is taken over all paths $\gamma\colon T\to X$ converging
to $\xi$.

The proof of the following proposition is easy.

\begin{proposition}
\label{prop:H_properies}
For all horofunctions $\xi$, $\eta$, and $\nu$,
\begin{enumerate}[(i)]
\item
\label{itema}
$H(\xi,\eta) \ge 0$;
\item
\label{itemb}
$H(\xi,\nu) \le H(\xi,\eta) + H(\eta,\nu)$.
\end{enumerate}
\end{proposition}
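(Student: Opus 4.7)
The plan is to derive both parts from the definition of $H$ as a supremum of infima, using two elementary pointwise properties of horofunctions that follow directly from the triangle inequality. Writing $\eta = \lim_n \distfn_{z_n}$, the inequality $d(b,z_n) \le d(b,x) + d(x,z_n)$ rearranges to $\distfn_{z_n}(x) \ge -d(b,x)$, giving in the limit
\begin{align*}
\eta(x) \ge -d(b,x) \qquad \text{for all } x \in X,
\end{align*}
and $d(x,z_n) \le d(x,y) + d(y,z_n)$ similarly yields
\begin{align*}
\eta(x) - \eta(y) \le d(x,y) \qquad \text{for all } x, y \in X.
\end{align*}

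Part~(\ref{itema}) is then immediate from the first inequality: it says $d(b,x) + \eta(x) \ge 0$ pointwise, so every inner infimum in the definition of $H(\xi,\eta)$ is nonnegative, and hence so is the outer supremum.

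For part~(\ref{itemb}) I would first establish the pointwise estimate $\nu(x) \le \eta(x) + H(\eta,\nu)$ for all $x \in X$. Fix $\epsilon > 0$ and, using the sup-inf definition of $H(\eta,\nu)$ together with a shrinking sequence of neighborhoods of $\eta$, pick a sequence $y_m \to \eta$ in the horofunction compactification satisfying $d(b,y_m) + \nu(y_m) \le H(\eta,\nu) + \epsilon + 1/m$. The second property above, applied with $y = y_m$, gives $\nu(x) \le d(x,y_m) + \nu(y_m)$, which rewrites as
\begin{align*}
d(b,x) + \nu(x) \le \bigl(d(b,x) + \distfn_{y_m}(x)\bigr) + \bigl(d(b,y_m) + \nu(y_m)\bigr).
\end{align*}
Since $y_m \to \eta$ pointwise, the first bracket converges to $d(b,x) + \eta(x)$; the liminf of the second is at most $H(\eta,\nu) + \epsilon$. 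Letting $\epsilon \to 0$ yields the pointwise estimate, after which applying $\inf_{x \in W \cap X}$ over a neighborhood $W$ of $\xi$ and then $\sup_W$ gives $H(\xi,\nu) \le H(\xi,\eta) + H(\eta,\nu)$.

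The one step requiring care is the liminf manipulation in the middle: because the first bracket converges to a definite limit rather than merely having a small liminf, the liminf distributes additively across the sum and no subsequence extraction is needed. Everything else is just bookkeeping with the defining sup-inf.
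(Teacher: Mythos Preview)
Your proof is correct. The paper itself omits the argument, stating only that ``the proof of the following proposition is easy,'' so there is no alternative approach to compare against; your route via the pointwise estimate $\nu(x)\le\eta(x)+H(\eta,\nu)$ is a clean and standard way to obtain the triangle inequality, and part~(\ref{itema}) is handled exactly as one would expect.

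One minor remark: your extraction of a \emph{sequence} $y_m\to\eta$ from the sup--inf definition tacitly uses first countability of the horofunction compactification at $\eta$. This is available here because assumption~(\ref{assump1}) makes $(X,\symdist)$ proper, hence $\sigma$-compact, so $C(X)$ with the compact-open topology is metrizable. If you wanted the argument to go through without that assumption you could avoid sequences entirely: given $\epsilon>0$, the set $\{f\in C(X):|f(x)-\eta(x)|<\epsilon\}$ is an open neighbourhood of $\eta$, and picking a single $y$ in its intersection with $X$ satisfying $d(b,y)+\nu(y)\le H(\eta,\nu)+\epsilon$ already gives $\nu(x)\le\eta(x)+H(\eta,\nu)+2\epsilon$. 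Either way, the conclusion and the rest of the bookkeeping are exactly as you wrote.
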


\begin{proposition}
\label{prop:zero_on_busemann}
Assume that $(X,d)$ satisfies
assumptions~(\ref{assump1}),~(\ref{assump2}), and~(\ref{assump3}),
and let $\xi$ be a horofunction.
Then, $H(\xi,\xi)=0$ if and only if $\xi$ is Busemann.
\end{proposition}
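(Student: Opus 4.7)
The strategy is to exploit the two equivalent formulas for the detour cost: the supremum-over-neighbourhoods version to extract from $H(\xi,\xi)=0$ a sequence in $X$ along which the cost vanishes, and the infimum-over-paths version to bound $H(\xi,\xi)$ from above by evaluating along a given almost-geodesic.

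For the ``if'' direction, suppose $\gamma\colon T\to X$ is an almost-geodesic with $\gamma(t)\to\xi$. The infimum formula gives
$$H(\xi,\xi) \le \liminf_{s\to\sup T}\bigl(d(b,\gamma(s))+\xi(\gamma(s))\bigr).$$
Setting $s=t$ in the almost-geodesic inequality yields $d(\gamma(0),\gamma(t))=t+o(1)$, and subtracting this from the general inequality produces $d(\gamma(0),\gamma(s))+d(\gamma(s),\gamma(t))-d(\gamma(0),\gamma(t))<2\epsilon$ for large $s\le t$. Letting $t\to\sup T$ on the left, using $\gamma(t)\to\xi$ and the definition of $\xi$, this becomes $d(\gamma(0),\gamma(s))+\xi(\gamma(s))-\xi(\gamma(0))\le 2\epsilon$. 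Finally, the pointwise convergence $d(b,\gamma(s))-d(\gamma(0),\gamma(s))\to -\xi(\gamma(0))$ as $s\to\sup T$ converts this into $d(b,\gamma(s))+\xi(\gamma(s))\le 2\epsilon+o(1)$, which combined with Proposition \ref{prop:H_properies}(\ref{itema}) forces $H(\xi,\xi)=0$.

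For the ``only if'' direction, since the horofunction compactification is metrizable, the supremum formula lets me choose a sequence $x_n\in X$ with $x_n\to\xi$ and $d(b,x_n)+\xi(x_n)\to 0$. The convergence $x_m\to\xi$ also yields $d(y,x_m)-d(b,x_m)\to\xi(y)$ for each fixed $y$, so a diagonal extraction produces a subsequence $y_k=x_{n_k}$ satisfying $d(b,y_k)+\xi(y_k)<1/k$ together with $|d(y_j,y_k)-d(b,y_k)-\xi(y_j)|<1/k$ for all $j<k$. Adding these inequalities gives
$$0\le d(b,y_j)+d(y_j,y_k)-d(b,y_k)<1/j+1/k \quad\text{for } j\le k,$$
which is exactly the almost-geodesic inequality once we reparameterise by $\gamma(0)=b$ and $\gamma(t_k)=y_k$ with $t_k=d(b,y_k)$. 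The main obstacle lies in this final step: the definition of almost-geodesic requires the $t_k$ to be strictly increasing with supremum not attained, whereas non-reversibility of $d$ allows $d(b,y_k)$ to remain bounded or to fail to be monotone as $y_k\to\xi$, as in the reverse-Funk example of Figure \ref{fig:finite_dist_to_boundary}. Handling these degeneracies requires a small auxiliary perturbation of the $t_k$ and a check that the perturbed parameters still satisfy the above estimate.
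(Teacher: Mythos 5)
Your route is essentially the one in the proof the paper relies on (it cites \cite{walsh_stretch} rather than reproducing the argument): for the ``if'' direction you evaluate the path formula for the detour cost along an almost-geodesic converging to $\xi$, and for the ``only if'' direction you extract, from the neighbourhood formula and first countability of the compactification, a sequence $y_k\to\xi$ with $d(b,y_k)+\xi(y_k)\to 0$, refine it diagonally, and reparameterise it as an almost-geodesic. The ``if'' half as you have written it is complete and correct (the inequality $H(\xi,\xi)\ge 0$ from Proposition~\ref{prop:H_properies}(\ref{itema}) finishes it).

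The ``main obstacle'' you flag at the end is not a genuine gap, and closing it needs no idea beyond your own displayed inequality. Since $d(y_j,y_k)\ge 0$, that inequality gives $d(b,y_k)>d(b,y_j)-1/j-1/k$ for $j\le k$, so the sequence $d(b,y_k)$ is almost non-decreasing and hence converges to some $L\in(0,\infty]$; the case $L=0$ is impossible, because $d(b,y_k)\to 0$ would give $y_k\to b$ by assumption~(\ref{assump3}), forcing $\xi$ to be the function associated to $b$ rather than a horofunction. If $L=\infty$, pass to a further subsequence along which $d(b,y_k)$ is strictly increasing and set $t_k:=d(b,y_k)$; if $L<\infty$, keep the whole sequence and set $t_k:=L-1/k$ for $k$ large, so that $t_k$ is strictly increasing, $t_k>0$, and $|t_k-d(b,y_k)|\to 0$. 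In either case take $T=\{0\}\cup\{t_k\}$, $\gamma(0)=b$, $\gamma(t_k)=y_k$: then $\sup T$ is not attained, parameter order agrees with index order (so your estimate, proved for $j\le k$, is the one needed), and $|d(b,y_j)+d(y_j,y_k)-t_k|\le(1/j+1/k)+|d(b,y_k)-t_k|$, which is small for $j\le k$ large; thus $\gamma$ is an almost-geodesic, and it converges to $\xi$ because the $y_k$ do. Note that the bounded case $L<\infty$, which you correctly attribute to non-reversibility (Figure~\ref{fig:finite_dist_to_boundary}), is precisely why the paper relaxes Rieffel's definition to allow bounded parameter sets, so no perturbation is needed there beyond choosing $t_k<L$.
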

The proof of this proposition may be found in~\cite{walsh_stretch}.
There, it was assumed that $\dist(b,x_n)$ converges to infinity
for a sequence $x_n$ in $X$ whenever $\symdist(b,x_n)$ does.
Although this is not true for the reverse-Funk metric, it was only needed to
ensure that the almost-geodesic constructed is defined on all of $\Rplus$.
Since the definition of almost-geodesic has been modified in the current
setting, this is no longer necessary.

Propositions~\ref{prop:H_properies} and~\ref{prop:zero_on_busemann} together
say that, on the set of Busemann points,
the detour cost is a \emph{weak metric},
sometimes also called an \emph{extended pseudo quasi-metric}.

By symmetrising, one may obtain a genuine metric on the set of Busemann points
$X_B(\infty)$. For Busemann points $\xi$ and $\eta$ in $X_B(\infty)$, define
\begin{equation*}\label{eq:3.4}
\delta(\xi,\eta) := H(\xi,\eta)+H(\eta,\xi).
\end{equation*}
This construction first appeared in~\cite[Remark~5.2]{AGW-m}.
See~\cite{walsh_stretch} for a proof of the following proposition.
\begin{proposition}
\label{prop:delta_metric}
The function $\delta\colon X_B(\infty)\times X_B(\infty)\to [0,\infty]$
is a (possibly $\infty$-valued) metric.
\end{proposition}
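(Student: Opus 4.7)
The plan is to verify the four axioms of a (possibly $\infty$-valued) metric. Three of them are essentially immediate. Non-negativity follows from Proposition \ref{prop:H_properies}(\ref{itema}) applied to each summand, symmetry is visible in the definition $\delta(\xi,\eta)=H(\xi,\eta)+H(\eta,\xi)$, and the triangle inequality $\delta(\xi,\nu)\le\delta(\xi,\eta)+\delta(\eta,\nu)$ is obtained by adding the two instances of Proposition \ref{prop:H_properies}(\ref{itemb}) for the triples $(\xi,\eta,\nu)$ and $(\nu,\eta,\xi)$.

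The substantive step is the identity of indiscernibles: $\delta(\xi,\eta)=0$ if and only if $\xi=\eta$. The ``if'' direction is immediate from Proposition \ref{prop:zero_on_busemann}, which gives $H(\xi,\xi)=0$ because $\xi$ is Busemann, hence $\delta(\xi,\xi)=0$. For the converse, assume $H(\xi,\eta)=0$ and $H(\eta,\xi)=0$. I would first extract approximating sequences. A short triangle-inequality estimate shows that $d(b,x)+\eta(x)\ge 0$ for every $x\in X$, so the $\sup$--$\inf$ definition of the detour cost forces $\inf_{x\in W\cap X}(d(b,x)+\eta(x))=0$ for every neighbourhood $W$ of $\xi$. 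Since the horofunction compactification is a compact metric space (by Ascoli--Arzel\`a together with assumption~(\ref{assump1})), a countable neighbourhood base at $\xi$ yields a sequence $x_n\to\xi$ with $d(b,x_n)+\eta(x_n)\to 0$; symmetrically, there is $y_m\to\eta$ with $d(b,y_m)+\xi(y_m)\to 0$.

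To close the argument I would show that $\xi(z)\ge\eta(z)$ for every $z\in X$, the reverse inequality following by interchanging the roles of $\xi$ and $\eta$. Since $x_n\to\xi$ in $C(X)$ and $d(b,x_n)=-\eta(x_n)+o(1)$,
\begin{equation*}
\xi(z)=\lim_{n\to\infty}\bigl(d(z,x_n)-d(b,x_n)\bigr)=\lim_{n\to\infty}\bigl(d(z,x_n)+\eta(x_n)\bigr).
\end{equation*}
Expanding $\eta(x_n)=\lim_{m\to\infty}\bigl(d(x_n,y_m)-d(b,y_m)\bigr)$ and applying the triangle inequality $d(z,x_n)+d(x_n,y_m)\ge d(z,y_m)$ gives
\begin{equation*}
\xi(z)\ge\lim_{m\to\infty}\bigl(d(z,y_m)-d(b,y_m)\bigr)=\eta(z),
\end{equation*}
as required.

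The main obstacle is precisely this identity of indiscernibles: one must extract the two approximating sequences from the vanishing of $H$ in both directions and then combine them through the telescoping computation above. The extraction relies on metrizability of the horofunction compactification and on the nonnegativity of $d(b,\cdot)+\eta(\cdot)$; the telescoping computation is where the two sequences must be used in tandem. Everything else in the proof is formal.
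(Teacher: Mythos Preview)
Your argument is correct. The paper does not actually prove this proposition here; it defers the proof to~\cite{walsh_stretch}, so there is no in-paper argument to compare against. Your verification of the three easy axioms is exactly right, and your treatment of the identity of indiscernibles is sound: the nonnegativity of $d(b,\cdot)+\eta(\cdot)$ together with metrizability of the compactification indeed yields the approximating sequence $x_n\to\xi$ with $d(b,x_n)+\eta(x_n)\to 0$, and the telescoping estimate then gives $\xi\ge\eta$.

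One small simplification: you do not need the second sequence $y_m$ in the displayed computation. Any horofunction $\eta$ satisfies $\eta(z)-\eta(x)\le d(z,x)$ (inherited from the corresponding inequality for $\psi_w$), so $d(z,x_n)+\eta(x_n)\ge\eta(z)$ holds directly, and letting $n\to\infty$ gives $\xi(z)\ge\eta(z)$. The sequence $y_m$ is only needed, symmetrically, to run the same argument with the roles of $\xi$ and $\eta$ interchanged. This does not affect correctness, but it clarifies that the two approximating sequences play independent roles rather than being combined.
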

We call $\delta$ the
\emph{detour metric}\index{detour metric}\index{metric!detour}.

Note that we can partition $X_B(\infty)$ into disjoint subsets such that
$\delta(\xi,\eta)$ is finite for each pair of Busemann points $\xi$ and $\eta$
lying in the same subset. We call these subsets the \emph{parts}
\index{parts of boundary} of the horofunction boundary of $(X,d)$,
and $\delta$ is a genuine metric on each one.

Consider an isometry $g$ from one metric space $(X,d)$ to another $(X',d')$.
We can extend $g$ continuously to the horofunction boundary $X(\infty)$ of $X$
as follows:
\begin{align}
\label{eqn:action_on_boundary}
g(\xi)(x') := \xi(g^{-1}(x'))-\xi(g^{-1}(b')),
\end{align}
for all $\xi\in X(\infty)$ and $x'\in X'$. Here $b'$ is the base-point of $X'$.

\subsection{Horoballs}
\newcommand\hausdorffdist{\delta_{\operatorname{Haus}}}
\newcommand\sublevel{\operatorname{slv}}

\newcommand\pul{\operatorname{Ls}}
\newcommand\pll{\operatorname{Li}}

To visualize horofunctions, it is often useful to consider their
horoballs\index{horoball},
that is, their sublevel sets. We denote the sublevel set of a function $f$
at height $\alpha$ by $\sublevel(f,\alpha):=\{x\in X\mid f(x)\le \alpha\}$.
Intuitively, a horoball looks like a large ball of the metric whose center
is at infinity.

To make this precise, we define a topology on the set of closed
subsets of $X$, called the Painlev\'e--Kuratowski topology%
\index{Painlev\'e--Kuratowski topology}\index{topology!Painlev\'e--Kuratowski}.
In this topology, a sequence of closed sets $(C_n)_{n\in\N}$
is said to converge to a closed set $C$ if the upper and lower closed limits
(also known as the limit superior and limit inferior) of
the sequence both equal $C$. These limits are defined to be, respectively,
\begin{align*}
\pul C_n &:= \bigcap_{n\ge 0} \closure \Big( \bigcup_{i>n} C_i \Big)
\qquad
\text{and} \\
\pll C_n &:=
   \bigcap\Big( \closure \bigcup_{i\ge 0} C_{n_i}
         \mid \text{$(n_i)_{i\in\N}$ is an increasing sequence in $\N$} \Big).
\end{align*}
An alternative characterisation of convergence is that $(C_n)_{n\in\N}$
converges to $C$ if and only if each of the following hold:
\begin{itemize}
\item
for each $x\in C$, there exists $x_n\in C_n$ for all $n$ large enough,
such that $(x_n)_n$ converges to $x$;
\item
if $(C_{n_k})_{k\in\N}$ is a subsequence of the sequence of sets and
$x_k\in C_{n_k}$ for each $k\in\N$, then convergence of $(x_k)_{k\in\N}$
to $x$ implies that $x\in C$.
\end{itemize}
See the book~\cite{beer_book} for more about this and other topologies
on sets of closed sets.

For each $z\in X$ and $\alpha\in\R$, we denote by
$B(z,\alpha):= \{x\in X \mid \dist(x,z)\le\alpha\}$ the right closed ball
about $z$ of radius $\alpha$.
In~\cite{karl_metz_nosk_horoballs}, it was shown that in a geodesic metric
space each horoball $\sublevel(\xi,\alpha)$ of a horofunction $\xi$ that is
the limit of a sequence $z_n$ takes the form $\pul B(z_n,\dist(b,z_n)+\alpha)$.
Here we improve this result by establishing a necessary and sufficient
condition for convergence to a horofunction in terms of convergence
of balls in the Painlev\'e--Kuratowski topology.

\begin{lemma}
\label{lem:small_downhill}
Assume $(X,d)$ satisfies~(\ref{assump1}), (\ref{assump2}), and (\ref{assump3}).
Let $x\in X$, and let $\xi\in X(\infty)$ be a horofunction.
Then, for all $\epsilon\ge 0$ small enough, there exists $y\in X$ such that
$\xi(x) - \xi(y) = d(x,y) = \epsilon$.
\end{lemma}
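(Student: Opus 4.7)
The plan is to approximate $\xi$ by a sequence $\distfn_{z_n}\to\xi$ with $z_n\in X$, pick a geodesic $\gamma_n\colon[0,l_n]\to X$ from $x$ to $z_n$ provided by assumption~(\ref{assump2}) (where $l_n:=\dist(x,z_n)$), and set $y_n:=\gamma_n(\epsilon)$. The geodesic identity $\dist(\gamma_n(s),\gamma_n(t))=t-s$ immediately gives $\dist(x,y_n)=\epsilon$ and $\dist(y_n,z_n)=l_n-\epsilon$, so that $\distfn_{z_n}(y_n)=\distfn_{z_n}(x)-\epsilon$. If a subsequence of $(y_n)$ converges in $\symdist$ to some $y^*\in X$, then $\dist(x,y^*)=\epsilon$ by continuity, and since each $\distfn_{z_n}$ is $1$-Lipschitz with respect to $\symdist$ we have $|\distfn_{z_n}(y_n)-\distfn_{z_n}(y^*)|\le\symdist(y_n,y^*)\to0$, whence $\xi(y^*)=\lim_n\distfn_{z_n}(y_n)=\xi(x)-\epsilon$, as required.

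For this construction to make sense I first need $l_n\ge\epsilon$ eventually. I would verify that $\liminf_n\dist(x,z_n)>0$: otherwise a subsequence would satisfy $\dist(x,z_n)\to0$, whence by~(\ref{assump3}) also $\dist(z_n,x)\to0$, so $z_n\to x$ in $\symdist$; but since $\distfn$ is an embedding of $X$, this forces $\xi=\distfn_x\in\distfn(X)$, contradicting $\xi\in X(\infty)$. So any $\epsilon$ below $\liminf_n\dist(x,z_n)$ is an admissible choice.

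The main obstacle is extracting a $\symdist$-convergent subsequence from $(y_n)$; by~(\ref{assump1}) this reduces to a $\symdist$-bound, and since $\dist(x,y_n)=\epsilon$ is automatic the real issue is to control $\dist(y_n,x)$. Here the non-reversibility of $\dist$ is the delicate point, as a forward ball need not be symmetrically bounded in general. My plan is to strengthen assumption~(\ref{assump3}) to the uniform statement that
\begin{equation*}
\phi(\epsilon):=\sup\{\dist(y,x)\mid y\in X,\ \dist(x,y)\le\epsilon\}
\end{equation*}
satisfies $\phi(\epsilon)\to0$ as $\epsilon\to0^+$. If this failed, one could extract $y_k\in X$ with $\dist(x,y_k)\to0$ while $\dist(y_k,x)$ stayed bounded below by a positive constant, contradicting~(\ref{assump3}) directly. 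With this uniform version in hand, the forward ball $\{y\in X\mid\dist(x,y)\le\epsilon\}$ sits inside the $\symdist$-ball of radius $\epsilon+\phi(\epsilon)$ about $x$, which is compact by~(\ref{assump1}). Thus $(y_n)$ admits a $\symdist$-convergent subsequence, and the limiting argument of the first paragraph completes the proof.
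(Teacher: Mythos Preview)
Your proof is correct and follows essentially the same approach as the paper's: pick $z_n\to\xi$, take $y_n$ at distance $\epsilon$ along a geodesic from $x$ to $z_n$, extract a convergent subsequence, and pass to the limit. The paper's version is terser --- it simply asserts that ``some subsequence of $(y_n)$ converges to a point $y\in X$'' without justification --- whereas you carefully derive the needed $\symdist$-compactness by upgrading~(\ref{assump3}) to the uniform statement $\phi(\epsilon)\to0$, which is a genuine (and correct) detail the paper leaves implicit.
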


\begin{proof}
Choose a sequence $(z_n)$ in $X$ converging to $\xi$. Since no subsequence
of $(z_n)$ converges to $x$, we have that $(z_n)$ eventually remains outside
any sufficiently small open ball $\{z\in X\mid d(x,z)<\epsilon\}$ about $x$.
So, for $n$ large enough,
we may choose a point $y_n$ on a geodesic from $x$ to $z_n$ such that
$d(x,z_n)-d(y_n,z_n)=d(x,y_n)=\epsilon$. But some subsequence of $(y_n)$
converges to a point $y\in X$, and, taking limits, we get the result.
\end{proof}

\begin{lemma}
\label{lem:sublevels_continuous}
Assume $(X,d)$ satisfies~(\ref{assump1}), (\ref{assump2}), and (\ref{assump3}).
Let $\xi\in X(\infty)$ be a horofunction. Then, $\sublevel(\xi,\cdot)$ is
continuous in the Painlev\'e--Kuratowski topology.
\end{lemma}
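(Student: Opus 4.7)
The plan is to verify the two defining conditions of Painlev\'e--Kura\-towski convergence directly: for every sequence $\alpha_n\to\alpha$ in $\R$, I want to show both $\pul \sublevel(\xi,\alpha_n) \subseteq \sublevel(\xi,\alpha)$ and $\sublevel(\xi,\alpha) \subseteq \pll \sublevel(\xi,\alpha_n)$.

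The first inclusion (upper semicontinuity of $\sublevel(\xi,\cdot)$) is essentially automatic. If a subsequence $x_{n_k} \in \sublevel(\xi,\alpha_{n_k})$ converges in $\symdist$ to some $x$, then $\xi(x_{n_k}) \le \alpha_{n_k}$; passing to the limit, and using the fact that elements of $\closure\distfn(X)$ are equi-Lipschitzian in $\symdist$ and therefore continuous, we conclude $\xi(x) \le \alpha$, i.e., $x \in \sublevel(\xi,\alpha)$.

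The second inclusion (lower semicontinuity) is the substantive content, and this is where Lemma~\ref{lem:small_downhill} is invoked. Fix $x \in \sublevel(\xi,\alpha)$. If $\xi(x) < \alpha$, or if $\alpha_n \ge \alpha$ for all sufficiently large $n$, then setting $x_n := x$ eventually gives $x_n \in \sublevel(\xi,\alpha_n)$. The interesting case is $\xi(x) = \alpha$ combined with $\alpha_n < \alpha$: I need points close to $x$ whose $\xi$-values drop down to $\alpha_n$. Set $\epsilon_n := \alpha - \alpha_n$; for $n$ large, $\epsilon_n$ is as small as desired, so Lemma~\ref{lem:small_downhill} supplies $y_n\in X$ with $\xi(y_n) = \xi(x)-\epsilon_n = \alpha_n$ and $d(x,y_n) = \epsilon_n$. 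In particular $y_n \in \sublevel(\xi,\alpha_n)$, and $d(x,y_n) \to 0$; by assumption~(\ref{assump3}) this forces $d(y_n,x) \to 0$ as well, so $\symdist(x,y_n) \to 0$ and $y_n \to x$ in the topology of $X$.

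The main obstacle is precisely the boundary case $\xi(x) = \alpha$ with $\alpha_n \uparrow \alpha$: the constant sequence $x$ fails, and one needs a mechanism for perturbing $x$ infinitesimally in a direction along which $\xi$ decreases at unit rate. Lemma~\ref{lem:small_downhill} is tailor-made for this, and is the only nontrivial ingredient. A small hygienic check is that the argument still behaves if $\sublevel(\xi,\alpha)$ happens to be empty, but then the upper-semicontinuity half forces $\sublevel(\xi,\alpha_n)$ to be eventually empty too, and there is nothing to prove.
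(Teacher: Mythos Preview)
Your proof is correct and follows essentially the same approach as the paper's: verify the two halves of Painlev\'e--Kuratowski convergence directly, using continuity of $\xi$ for the upper inclusion and Lemma~\ref{lem:small_downhill} for the lower one. The paper streamlines your case split by setting $\epsilon_n := (\alpha-\alpha_n)^+$ uniformly, which simultaneously covers the indices with $\alpha_n\ge\alpha$ (where $\epsilon_n=0$ and $y_n=y$) and those with $\alpha_n<\alpha$; this also sidesteps the case you left implicit, where $\xi(x)=\alpha$ and the sequence $\alpha_n$ oscillates above and below $\alpha$. Your explicit invocation of assumption~(\ref{assump3}) to pass from $d(x,y_n)\to0$ to $\symdist(x,y_n)\to0$ is a point the paper leaves tacit. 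One small inaccuracy: in your final remark, upper semicontinuity does not force $\sublevel(\xi,\alpha_n)$ to be \emph{eventually empty}, only $\pul\sublevel(\xi,\alpha_n)=\emptyset$; but since $\pll\subseteq\pul$, this already suffices for the convergence, so your conclusion stands.
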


\begin{proof}
Let $\alpha\in\R$, and let $(\alpha_n)$ be a sequence in $\R$ converging
to $\alpha$.

Let $(n_k)$ be an increasing sequence in $\N$, and let $(x_k)$ be a sequence
of points in $X$ converging to some point $x\in X$,
such that $x_k\in\sublevel(\xi,\alpha_{n_k})$ for all $k\in\N$. So,
\begin{align*}
\xi(x)
   = \lim_{k\to\infty} \xi(x_k)
   \le \lim_{k\to\infty} \alpha_{n_k}
   = \alpha.
\end{align*}
Therefore, $x\in\sublevel(\xi,\alpha)$.

Now let $y\in\sublevel(\xi,\alpha)$. For any $c\in\R$, we use the notation
$c^+:=\max(c,0)$. By Lemma~\ref{lem:small_downhill}, for $n$ large enough,
there exists $y_n\in X$ such that
\begin{align*}
\xi(y)-\xi(y_n)
   = d(y,y_n)
   = (\alpha-\alpha_n)^+.
\end{align*}
Observe that $(\alpha-\alpha_n)^+$ is a non-negative sequence converging to
zero. It follows that $(y_n)$ converges to $y$, and that
$y_n\in\sublevel(\xi,\alpha_n)$ for all $n$ large enough.
\end{proof}

The Painlev\'e--Kuratowski topology can be used to define a topology
on the space of lower-semicontinuous functions on $X$ as follows.
Recall that the epigraph of a function $f$ on $X$ is the set
$\{(x,\alpha)\in X\times\R\mid f(x)\le\alpha\}$. A sequence of
lower-semicontinuous functions is declared to be convergent in the
\emph{epigraph topology}\index{epigraph topology}\index{topology!epigraph}
if the associated epigraphs converge in the
Painlev\'e--Kuratowski topology on $X\times\R$. For proper metric spaces
the epigraph topology is identical to another topology called the
Attouch--Wets topology.

\begin{lemma}
\label{lem:function_converges}
Make assumption~(\ref{assump1}), which is that $(X,\symdist)$ is a proper
metric space.
Let $\xi$ be a real-valued lower-semicontinuous function on $X$, and let
$\xi_n$ be a sequence of such functions that is equi-Lipschitzian with
respect to $\symdist$. Then, $\xi_n$ converges to $\xi$
in the epigraph topology if and only if it converges to $\xi$ uniformly on
bounded sets.
\end{lemma}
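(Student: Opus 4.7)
The plan is to treat the two implications separately, using the two-part characterisation of Painlev\'e--Kuratowski convergence recalled just above. The forward direction (epigraph $\Rightarrow$ uniform on bounded sets) will proceed by first extracting pointwise convergence and then promoting it via Arzel\`a--Ascoli. The reverse direction is a direct verification of both PK conditions for the epigraphs.

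For the forward direction, fix $x\in X$. Applying the ``lower-limit'' part of PK-convergence to the epigraph point $(x,\xi(x))\in\operatorname{epi}(\xi)$ produces $(x_n,\alpha_n)\in\operatorname{epi}(\xi_n)$ with $x_n\to x$ and $\alpha_n\to\xi(x)$; the equi-Lipschitz bound
\begin{align*}
\xi_n(x)\le\xi_n(x_n)+L\,\symdist(x,x_n)\le\alpha_n+L\,\symdist(x,x_n)
\end{align*}
then gives $\limsup_n\xi_n(x)\le\xi(x)$. For the reverse inequality I would pass to any subsequence along which $\xi_{n_k}(x)$ has a limit $L\in\R\cup\{-\infty\}$. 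The ``upper-limit'' part of PK-convergence applied to the points $(x,-M)$, which would eventually lie in $\operatorname{epi}(\xi_{n_k})$ if $L=-\infty$, would force $\xi(x)\le -M$ for every $M$, contradicting the real-valuedness of $\xi$; so $L\in\R$. That same upper-limit condition applied now to $(x,\xi_{n_k}(x))\to(x,L)$ gives $\xi(x)\le L$, hence $\liminf_n\xi_n(x)\ge\xi(x)$, and $\xi_n(x)\to\xi(x)$ pointwise. Since $(X,\symdist)$ is proper and $(\xi_n)$ is equi-Lipschitz (hence equi-continuous), Arzel\`a--Ascoli upgrades this to uniform convergence on compact, and therefore bounded, sets.

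For the reverse direction, assume $\xi_n\to\xi$ uniformly on bounded sets. To verify the lower-limit PK condition for the epigraphs, for each $(x,\alpha)\in\operatorname{epi}(\xi)$ I would take the constant sequence $x_n:=x$ together with $\alpha_n:=\max(\alpha,\xi_n(x))$: by construction $(x_n,\alpha_n)\in\operatorname{epi}(\xi_n)$, and the pointwise convergence $\xi_n(x)\to\xi(x)\le\alpha$ gives $\alpha_n\to\alpha$. To verify the upper-limit PK condition, suppose $(x_k,\alpha_k)\in\operatorname{epi}(\xi_{n_k})$ converges to $(x,\alpha)\in X\times\R$. The set $\{x\}\cup\{x_k\}_k$ is $\symdist$-bounded since $x_k\to x$, so uniform convergence on it yields $\xi(x_k)=\xi_{n_k}(x_k)+o(1)\le\alpha_k+o(1)$, and lower-semicontinuity of $\xi$ delivers $\xi(x)\le\liminf_k\xi(x_k)\le\alpha$, so $(x,\alpha)\in\operatorname{epi}(\xi)$.

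The main obstacle is the forward direction's extraction of pointwise convergence; in particular, excluding $-\infty$ as a subsequential limit of $\xi_n(x)$ is where the real-valuedness of $\xi$ is genuinely used, since equi-Lipschitzness alone provides no lower bound on the functions. Once pointwise convergence is in hand, the Arzel\`a--Ascoli upgrade in the proper equi-Lipschitz setting is routine, and both parts of the reverse direction are essentially formal given lower-semicontinuity of $\xi$.
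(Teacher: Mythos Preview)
Your argument is correct. The paper, however, does not give a direct proof: it simply invokes Lemma~7.1.2 and Proposition~7.1.3 of Beer's book, together with the fact that Attouch--Wets convergence coincides with Painlev\'e--Kuratowski convergence of epigraphs on proper metric spaces. Your route is genuinely different in that it is self-contained, working directly from the two-clause characterisation of Painlev\'e--Kuratowski convergence already recalled in the text. The paper's citation is terse and offloads the work to a standard reference on set convergence; your elementary argument, by contrast, makes transparent exactly where each hypothesis enters---equi-Lipschitzness is used only in the forward direction (first to transfer the epigraph control at $x_n$ to control at $x$, then to feed Arzel\`a--Ascoli), real-valuedness of $\xi$ is what rules out $-\infty$ as a subsequential limit, and lower-semicontinuity of $\xi$ appears only in the upper-limit verification of the reverse direction.
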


\begin{proof}
This is a consequence of~\cite[Lemma~7.1.2]{beer_book}
and~\cite[Proposition~7.1.3]{beer_book}, and the fact that,
for proper metric spaces, Attouch--Wets
convergence is equivalent to epigraph convergence.
\end{proof}

We will also need the following result relating convergence of functions
to convergence of their sublevel sets. Recall that a proper metric space
is always separable.

\begin{proposition}[{\cite[Theorem~5.3.9]{beer_book}}]
\label{prop:function_convergence}
Let $f$ be a lower-semicontinuous function in a separable metric space,
and let $f_n$ be a sequence of such functions.
Then, $f_n$ converges to $f$ in the epigraph topology if and only if
there exists, for all $\alpha\in\R$, a sequence $\alpha_n$ in $\R$ converging
to $\alpha$ such that $\sublevel(f_n,\alpha_n)$ converges to
$\sublevel(f,\alpha)$ in the Painlev\'e--Kuratowski topology.
\end{proposition}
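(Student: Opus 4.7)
The plan is to reformulate both the hypothesis and the conclusion in terms of subsets of $X\times\R$, exploiting that $\sublevel(f,\alpha)$ is precisely the horizontal slice $\{x\in X\mid (x,\alpha)\in\operatorname{epi}(f)\}$ of the epigraph at height $\alpha$. Epigraph convergence provides approximating sequences in $X\times\R$ converging to arbitrary points of $\operatorname{epi}(f)$, while Painlev\'e--Kuratowski convergence of sublevel sets provides, for a chosen $\alpha$, approximating sequences in $X$ at heights $\alpha_n\to\alpha$. The two directions of the equivalence essentially translate between these viewpoints.

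For the forward direction, assume $f_n\to f$ in the epigraph topology and fix $\alpha\in\R$. For \emph{any} choice of $\alpha_n\to\alpha$, the inclusion $\pul\sublevel(f_n,\alpha_n)\subseteq\sublevel(f,\alpha)$ is automatic: if $x_{n_k}\to x$ with $f_{n_k}(x_{n_k})\le\alpha_{n_k}$, then $(x_{n_k},\alpha_{n_k})\in\operatorname{epi}(f_{n_k})$ converges to $(x,\alpha)$, which lies in $\operatorname{epi}(f)$ by $\pul\operatorname{epi}(f_n)\subseteq\operatorname{epi}(f)$, and hence $f(x)\le\alpha$. The delicate point is choosing a single $\alpha_n\to\alpha$ that witnesses the reverse inclusion $\sublevel(f,\alpha)\subseteq\pll\sublevel(f_n,\alpha_n)$. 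Here I would fix a countable dense subset $\{x^{(k)}\}_{k\in\N}$ of $\sublevel(f,\alpha)$, available by separability, and for each $k$ use $\operatorname{epi}(f)\subseteq\pll\operatorname{epi}(f_n)$ to obtain $(x_n^{(k)},\beta_n^{(k)})\in\operatorname{epi}(f_n)$ converging to $(x^{(k)},\alpha)$. A diagonal construction then yields a single sequence $\alpha_n\to\alpha$ that eventually dominates $\beta_n^{(k)}$ for every fixed $k$; then $x^{(k)}\in\pll\sublevel(f_n,\alpha_n)$ for every $k$, and closedness of the lower limit extends this containment to all of $\sublevel(f,\alpha)$.

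For the converse, assume the sublevel sets converge as described. Given $(x,\alpha)\in\operatorname{epi}(f)$, apply the hypothesis at height $\alpha$ to obtain $\alpha_n\to\alpha$ and $x_n\in\sublevel(f_n,\alpha_n)$ with $x_n\to x$; then $(x_n,\alpha_n)\in\operatorname{epi}(f_n)$ converges to $(x,\alpha)$, proving $\operatorname{epi}(f)\subseteq\pll\operatorname{epi}(f_n)$. Conversely, if $(x_{n_k},\alpha_{n_k})\in\operatorname{epi}(f_{n_k})$ converges to $(x,\alpha)$, then for each $\epsilon>0$ apply the hypothesis at height $\alpha+\epsilon$ to obtain $\alpha'_n\to\alpha+\epsilon$ with converging sublevel sets; for large $k$ we have $\alpha_{n_k}\le\alpha'_{n_k}$, so $x_{n_k}\in\sublevel(f_{n_k},\alpha'_{n_k})$, and the $\limsup$ inclusion already established at height $\alpha+\epsilon$ forces $f(x)\le\alpha+\epsilon$. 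Letting $\epsilon\downarrow 0$ gives $(x,\alpha)\in\operatorname{epi}(f)$.

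The main obstacle is the diagonal construction in the forward direction: one needs a \emph{single} sequence $\alpha_n$ that works simultaneously for every point of $\sublevel(f,\alpha)$, and this is precisely where separability is indispensable. Without it, one obtains for each individual point $x\in\sublevel(f,\alpha)$ its own approximating heights $\beta_n^{(x)}\to\alpha$, but there is no reason these potentially uncountably many rates should admit a uniform dominator that itself converges to $\alpha$. Separability cuts the problem down to countably many witnesses, which the diagonal trick then threads into a single sequence.
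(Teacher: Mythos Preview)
The paper does not prove this proposition; it is quoted as Theorem~5.3.9 of Beer's book and used as a black box. So there is no in-paper argument to compare against.

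Your direct proof is sound. The reduction to the characterisation of Painlev\'e--Kuratowski convergence via approximating sequences is the natural move, and both directions are handled correctly. In particular, the diagonal construction in the forward direction---using a countable dense subset of $\sublevel(f,\alpha)$ to manufacture a single dominating sequence $\alpha_n\to\alpha$---is the crux, and you have identified exactly where separability enters. One small point worth making explicit: when $\sublevel(f,\alpha)=\emptyset$ the diagonal argument is vacuous, but then any choice $\alpha_n\to\alpha$ works, since the $\pul$ inclusion (which you showed holds for arbitrary $\alpha_n\to\alpha$) already forces $\pul\sublevel(f_n,\alpha_n)\subseteq\emptyset$. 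In the converse direction, your $\epsilon$-shift trick to pass from an arbitrary convergent sequence $(x_{n_k},\alpha_{n_k})$ in the epigraphs to membership in $\sublevel(f,\alpha+\epsilon)$ is clean and correct; the subsequence issue is harmless because any subsequential cluster point lies in the full-sequence upper limit.
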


Now we can prove our result concerning the convergence of balls.

\begin{proposition}
\label{prop:convergence_of_balls}
Let $z_n$ be a sequence in $X$. Then, $z_n$ converges to
a point $\xi$ in the horofunction boundary if and only if,
for each $\alpha\in\R$, the sequence of balls $B(z_n,d(b,z_n)+\alpha)$
converges to $\sublevel(\xi,\alpha)$ in the Painlev\'e--Kuratowski topology.
\end{proposition}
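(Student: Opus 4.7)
The plan is to recast the proposition in terms of sublevel sets and invoke the function-convergence machinery already set up. The key identity is
\begin{equation*}
B(z_n, d(b, z_n) + \alpha) = \{x \in X : d(x, z_n) - d(b, z_n) \le \alpha\} = \sublevel(\distfn_{z_n}, \alpha),
\end{equation*}
so the proposition asserts that $\distfn_{z_n} \to \xi$ uniformly on bounded sets of $\symdist$ if and only if $\sublevel(\distfn_{z_n}, \alpha) \to \sublevel(\xi, \alpha)$ in the Painlev\'e--Kuratowski topology for every $\alpha \in \R$. The functions $\distfn_{z_n}$ are $1$-Lipschitz with respect to $\symdist$ (using the triangle inequality in both directions), so Lemma~\ref{lem:function_converges} identifies uniform convergence on bounded sets with convergence in the epigraph topology, which brings Proposition~\ref{prop:function_convergence} into play.

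The backward implication is then immediate: the hypothesis provides, for each $\alpha$, the constant sequence $\alpha_n = \alpha$ witnessing the criterion of Proposition~\ref{prop:function_convergence}, so $\distfn_{z_n}$ converges to $\xi$ in the epigraph topology, hence uniformly on bounded sets. For the forward implication, epigraph convergence supplies, for each $\alpha$, \emph{some} sequence $\alpha_n \to \alpha$ with $\sublevel(\distfn_{z_n}, \alpha_n) \to \sublevel(\xi, \alpha)$, and the real task is to replace $\alpha_n$ by the constant $\alpha$. To do this, fix $\alpha$ and $\epsilon > 0$, and apply the criterion at $\alpha - \epsilon$ and $\alpha + \epsilon$ to obtain sequences $\alpha_n^\pm \to \alpha \pm \epsilon$ with $\sublevel(\distfn_{z_n}, \alpha_n^\pm) \to \sublevel(\xi, \alpha \pm \epsilon)$. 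For $n$ large enough $\alpha_n^- \le \alpha \le \alpha_n^+$, whence
\begin{equation*}
\sublevel(\distfn_{z_n}, \alpha_n^-) \subseteq \sublevel(\distfn_{z_n}, \alpha) \subseteq \sublevel(\distfn_{z_n}, \alpha_n^+).
\end{equation*}
Taking $\pll$ and $\pul$, which respect inclusion, sandwiches the lower and upper limits of $\sublevel(\distfn_{z_n}, \alpha)$ between $\sublevel(\xi, \alpha - \epsilon)$ and $\sublevel(\xi, \alpha + \epsilon)$. Letting $\epsilon \downarrow 0$ and invoking the continuity of $\sublevel(\xi, \cdot)$ from Lemma~\ref{lem:sublevels_continuous} collapses the sandwich to $\sublevel(\xi, \alpha)$, giving Painlev\'e--Kuratowski convergence for the constant level.

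The main obstacle is precisely this last step: bridging from the sequence-dependent form of Proposition~\ref{prop:function_convergence} to the fixed-level statement demanded by the proposition. Lemma~\ref{lem:sublevels_continuous}, together with the monotonicity of sublevel sets in the level parameter, is exactly what makes this upgrade work, and explains why that lemma was established beforehand.
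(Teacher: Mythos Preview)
Your proof is correct and follows essentially the same approach as the paper's: both rewrite the balls as sublevel sets of $\distfn_{z_n}$, use Lemma~\ref{lem:function_converges} together with Proposition~\ref{prop:function_convergence} for the backward direction, and for the forward direction sandwich $\sublevel(\distfn_{z_n},\alpha)$ between sublevel sets at nearby levels and pass to the limit via the continuity established in Lemma~\ref{lem:sublevels_continuous}. The only cosmetic difference is that the paper treats the lower bound with a single $\beta<\alpha$ and handles the upper bound ``similarly'', whereas you run both sides at once with $\alpha\pm\epsilon$.
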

\begin{proof}
Suppose that the balls converge as stated. Observe that
\begin{align*}
\sublevel(\distfn_{z},\alpha) = B(z,d(b,z)+\alpha),
\qquad\text{for all $z\in X$ and $\alpha\in\R$}.
\end{align*}
So, by Proposition~\ref{prop:function_convergence}, $\distfn_{z_n}$ converges
to $\xi$ in the epigraph topology, which is equivalent,
by Lemma~\ref{lem:function_converges}, to uniform convergence on bounded sets.

Now suppose that $z_n$ converges to $\xi$ in the horofunction boundary,
and let $\alpha\in\R$. Choose $\beta<\alpha$.
From Lemma~\ref{lem:function_converges}
and Proposition~\ref{prop:function_convergence},
we get that there exists a sequence $\beta_n$
in $\R$ converging to $\beta$ such that $B(z_n,d(b,z_n)+\beta_n)$ converges
to $\sublevel(\xi,\beta)$.
For $n$ large enough, $B(z_n,d(b,z_n)+\alpha)$ contains
$B(z_n,d(b,z_n)+\beta_n)$.
Therefore, $\pll B(z_n,d(b,z_n)+\alpha)$ contains $\sublevel(\xi,\beta)$.
Since this is true for all $\beta<\alpha$, and $\sublevel(\xi,\cdot)$
is continuous by Lemma~\ref{lem:sublevels_continuous}, we get
\begin{align*}
\pll B(z_n,d(b,z_n)+\alpha) \supset \sublevel(\xi,\alpha).
\end{align*}
The upper bound on the upper closed limit is proved in a similar manner.
\end{proof}

Figure~\ref{fig:converging_balls} illustrates the convergence of balls
to horoballs in the case of the Hilbert metric on the $2$-simplex.
\begin{figure}
\centering
\parbox{4.1cm}
   {\includegraphics[scale=0.8]{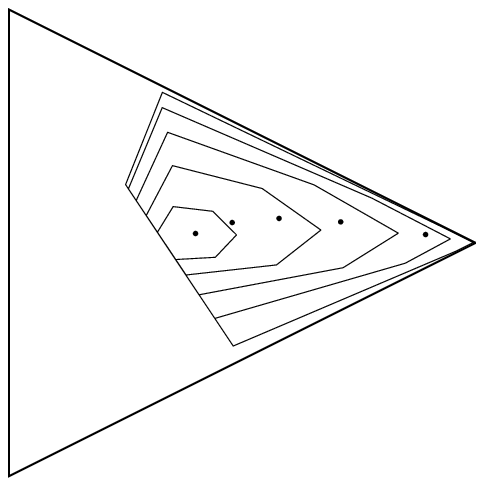}}
\qquad
\begin{minipage}{4.1cm}%
   \includegraphics[scale=0.8]{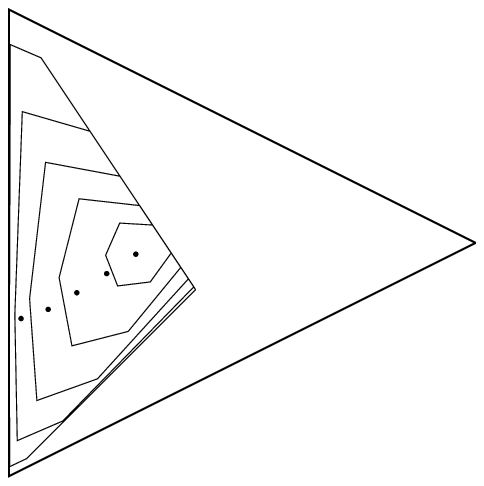}
\end{minipage}%
\caption{Sequences of balls converging to horoballs in the Hilbert
geometry.}
\label{fig:converging_balls}
\end{figure}

\section{The horofunction boundary of the Hilbert geometry}
\label{sec:hilbert_horoboundary}

In this section, we study the horofunction boundary of the Hilbert
metric on a bounded open convex set $D$.
As mentioned before, we will consider the horofunction boundaries of the
Funk and reverse-Funk geometries separately, and then combine the information
to get the boundary of the Hilbert geometry. Unless otherwise stated,
results come from~\cite{walsh_hilbert}.

\subsection{The horofunction boundary of the reverse-Funk geometry}

Observe that the function $\rev(\cdot,\cdot)$, defined on $D\times D$ extends
continuously to a function on $D\times\closure D$.
So, for all $z\in\closure D$, we may define on $D$ the function
$\hrev_{z}(\cdot):= \rev(\cdot,z)-\rev(b,z)$.
The following proposition shows that the horofunction compactification
of the reverse-Funk geometry is basically the same as the compactification
obtained by taking the closure of the domain in the usual topology.
We use $\partial D := \closure D \backslash D$ to denote the boundary of $D$
in the usual topology.

\begin{proposition}
\label{pro:reversehorofunctions}
Let $D$ be a bounded open convex set.
The set of horofunctions in the reverse-Funk geometry on $D$ is
$\revboundary:=\{\hrev_{x} \mid x\in\partial D \}$.
A sequence in $D$ converges to
$\hrev_{x}\in \revboundary$ if and only if it converges in the
usual sense to $x$.
\end{proposition}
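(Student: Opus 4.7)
The plan is to use the continuous extension of $\rev$ to $D\times\closure D$ mentioned just before the proposition. For every $x\in\closure D$ the function $\hrev_x=\rev(\cdot,x)-\rev(b,x)$ is then well defined on $D$, and for $x\in D$ it coincides with the embedded point $\distfn_x$. The proof splits into two directions joined by a standard compactness argument on $\closure D$, plus an injectivity check on $\partial D$.

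For the ``if'' direction, suppose $z_n\to x\in\partial D$ in the usual topology. By continuity of $\rev$ on $D\times\closure D$, the functions $\distfn_{z_n}$ converge pointwise on $D$ to $\hrev_x$. They are $1$-Lipschitz with respect to $\hil=\funk+\rev$, hence equi-Lipschitz, so pointwise convergence upgrades to uniform convergence on bounded sets of $\hil$, giving $\distfn_{z_n}\to\hrev_x$ in $C(X)$. To see $\hrev_x\notin\distfn(D)$: if $\hrev_x=\distfn_y$ for some $y\in D$, then because $\distfn$ is a homeomorphism onto its image we would have $z_n\to y$ in $\hil$, contradicting $\hil(b,z_n)\to\infty$. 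The latter holds because the chord through $b$ and $z_n$ has shrinking segment past $z_n$ as $z_n$ approaches $\partial D$, forcing $\funk(b,z_n)\to\infty$.

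For the ``only if'' direction, suppose $z_n$ converges to a horofunction $\xi$. By compactness of $\closure D$ in the usual topology, every subsequence of $z_n$ admits a further subsequence $z_{n_k}\to x$ for some $x\in\closure D$; applying the ``if'' direction, $\xi=\hrev_x$, and since $\xi$ is a horofunction, $x\in\partial D$. To promote this to convergence of the whole sequence in the usual topology, it suffices to show injectivity of $x\mapsto\hrev_x$ on $\partial D$: then any two subsequential limits must agree.

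The main obstacle is this injectivity. From the representation $\rev(y,x)=\funk(x,y)=-\log\bigl(1-|xy|/|xz(x,y)|\bigr)$, one gets $\rev(y,x)\to 0$ as $y\to x$ in the usual topology, so $\hrev_x(y)\to -\rev(b,x)$ stays finite. For a distinct boundary point $x'$, I would choose the approach of $y$ to $x$ so that $\rev(y,x')\to +\infty$: in the generic strictly convex situation, letting $y$ run along $[b,x]$ sends the boundary intersection past $y$ along the line $x'y$ to $x$ itself, so that $|z(x',y)-y|\to 0$. When $\partial D$ contains a flat piece through both $x$ and $x'$, one must approach $x$ transversely to that flat face to force the same divergence. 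Hence $\hrev_x\ne\hrev_{x'}$, injectivity holds, and the subsequence argument finishes the proof.
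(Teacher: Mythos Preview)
The paper does not actually give a proof of this proposition; it is stated as a result taken from~\cite{walsh_hilbert}, so there is no in-paper argument to compare against.

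Your overall strategy is the natural one and is correct: continuity of $\rev$ on $D\times\closure D$ plus the equi-Lipschitz bound handles the ``if'' direction, and the ``only if'' direction reduces, via compactness of $\closure D$, to injectivity of $x\mapsto \hrev_x$ on $\partial D$.

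The genuine gap is in your injectivity argument. Your claim that one can always force $\rev(y,x')\to+\infty$ as $y\to x$ is false when $x$ and $x'$ lie in the relative interior of the \emph{same} face of $\closure D$. Take $D=(-1,1)^2$, $x=(1,0)$, $x'=(1,\tfrac12)$. For any $y\in D$ with $y\to x$, the line through $x'$ and $y$ tends to the vertical line $\{1\}\times\R$, and the exit point past $y$ tends to the corner $(1,-1)$; a short computation gives $\rev(y,x')\to\log\tfrac32$, never $+\infty$. Your suggestion to ``approach $x$ transversely to the flat face'' does not help here: every approach from $D$ to $x$ is already transverse to that edge.

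Injectivity does hold, but for a different reason. One clean route: the sublevel set $\{y\in D:\rev(y,x)\le t\}$ is exactly the homothet $\bigl(e^{-t}x+(1-e^{-t})\closure D\bigr)\cap D$, a scaled copy of $D$ centred at $x$. If $\hrev_x=\hrev_{x'}$ these families of homothets must coincide up to a shift in the level; since $D$ is bounded, two homothets of $D$ agree only when both centre and ratio match, forcing the additive constant to vanish and $x=x'$. Alternatively, first use $\rev\ge0$ together with $\rev(y,x)\to0$ along $[b,x]$ and along $[b,x']$ to show the additive constant is zero, and then invoke the sublevel-set description to recover $x$.
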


Since straight line segments are geodesic in the reverse-Funk geometry,
it is clear that every horofunction in this geometry is Busemann.

The detour cost in this geometry was calculated
in~\cite{lemmens_walsh_polyhedral}.
Recall that a convex subset $E$ of a convex set $D$ is said to be an
\emph{extreme set}\index{extreme set}
if the endpoints of any line segment in $D$ are contained
in $E$ whenever any relative interior point of the line segment is.
The relative interiors of the extreme sets of a convex set $D$ partition $D$.

When we consider $\rev$ etc.~on convex sets other than $D$,
we use a subscript to specify the set.

\begin{proposition}[{\cite[Proposition~4.3]{lemmens_walsh_polyhedral}}]
\label{prop:reverse_detour}
Let $x$ and $y$ be in the usual boundary $\partial D$ of $D$.
Then, the detour cost in the reverse-Funk geometry is
\begin{equation*}
\detourrev(\hrev_{x},\hrev_{y}) = \rev_D(b,x) + \rev_F(x,y) - \rev_D(b,y),
\end{equation*}
if $y$ is in the smallest extreme set $F$ of $\closure D$ containing $x$,
and is infinity otherwise.
\end{proposition}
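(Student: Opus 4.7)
The plan is to start from the neighbourhood-supremum form of the detour cost specialised to the reverse-Funk geometry,
\begin{equation*}
\detourrev(\hrev_x, \hrev_y)
 = \sup_{W \ni \hrev_x}\, \inf_{a \in W \cap D}\bigl(\rev_D(b, a) + \hrev_y(a)\bigr),
\end{equation*}
and, using Proposition~\ref{pro:reversehorofunctions} to identify convergence to $\hrev_x$ with convergence to $x$ in the usual topology, to rewrite this as
\begin{equation*}
\liminf_{a \to x,\; a \in D}\bigl(\rev_D(b, a) + \rev_D(a, y)\bigr) \;-\; \rev_D(b, y).
\end{equation*}
The first summand converges to $\rev_D(b, x)$ by the continuous extension of $\rev_D$ from $D \times D$ to $D \times \closure D$, so the whole task reduces to analysing $\rev_D(a, y)$ as $a \to x$.

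I would write $\rev_D(a, y) = \log\bigl(|w(a)\,y|/|w(a)\,a|\bigr)$, where $w(a) \in \partial D$ is the exit point of $D$ along the ray from $y$ through $a$ beyond $a$. As $a \to x$, this ray rotates toward the half-line from $y$ through $x$. The key geometric input is the standard characterisation of the smallest extreme set of $\closure D$ containing $x$: $y$ lies in $F$ if and only if the segment $[y, x]$ can be prolonged past $x$ inside $\closure D$, and any such prolongation stays in $F$ by extremality of $F$.

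In the case $y \in F$, the limiting ray exits $\closure D$ at a relative-boundary point $w_F$ of $F$ strictly beyond $x$; a continuity argument for the exit map then gives $w(a) \to w_F$, so $\rev_D(a, y) \to \log(|w_F\,y|/|w_F\,x|) = \rev_F(x, y)$. In the case $y \notin F$, no such prolongation exists, so $w(a) \to x$, forcing $|w(a)\,a| \to 0$ and $\rev_D(a, y) \to \infty$. Substituting these two limits into the displayed liminf yields the dichotomy claimed in the proposition.

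The step I expect to demand the most care is the convergence $w(a) \to w_F$ in the first case. The limiting ray lies entirely in $\partial D$ along the face $F$, so one cannot directly invoke continuity of the exit map for rays transverse to $\partial D$; instead, one must combine extremality of $F$ with the local structure of $\closure D$ near $w_F$ to show that rays from $y$ through points $a \in D$ close to $x$ leave $D$ close to $w_F$, rather than being diverted elsewhere on $\partial D$. Everything else reduces to routine continuity and unwinding of definitions.
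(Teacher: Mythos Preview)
The paper does not actually give a proof of this proposition; it merely states the result and cites \cite[Proposition~4.3]{lemmens_walsh_polyhedral}. So there is no in-paper argument to compare against.

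Your plan is sound, and the reduction via Proposition~\ref{pro:reversehorofunctions} to
\[
\detourrev(\hrev_x,\hrev_y)
   = \rev_D(b,x) - \rev_D(b,y) + \liminf_{a\to x,\;a\in D}\rev_D(a,y)
\]
is correct. The geometric analysis of the exit point $w(a)$ is the natural way to proceed, and your characterisation of $F$ (the segment $[y,x]$ prolongs past $x$ in $\closure D$ iff $y\in F$) is exactly right, as is the case $y\notin F$.

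One refinement worth making in the case $y\in F$: you do not need the full convergence $w(a)\to w_F$, and trying to prove it for \emph{every} approaching sequence is harder than necessary. What you actually need is only that the \emph{liminf} of $\rev_D(a,y)$ equals $\rev_F(x,y)$, and this splits cleanly. For the lower bound, observe that every accumulation point $w^*$ of $w(a)$ lies on the limiting line through $y$ and $x$, on the far side of $x$ from $y$, and in $\closure D$; hence $w^*\in[x,w_F]$. Since $t\mapsto |ty|/|tx|$ is decreasing along $[x,w_F]$, one gets $\liminf \rev_D(a,y)\ge \rev_F(x,y)$ immediately. For the upper bound, it suffices to exhibit \emph{one} sequence: pick $q_n\in D$ with $q_n\to w_F$ (say $q_n=(1-\tfrac{1}{n})w_F+\tfrac{1}{n}b$), let $a_n$ be the point on the segment $(y,q_n)\subset D$ at the same parameter as $x$ on $[y,w_F]$; then $a_n\to x$ and the exit point of the ray from $y$ through $a_n$ lies beyond $q_n$, hence tends to $w_F$. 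This sidesteps the delicate continuity issue you flagged, which arises precisely because the limiting ray lies entirely in $\partial D$.
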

It follows immediately that the detour metric in this geometry is given by
\begin{equation*}
\deltarev(\hrev_{x},\hrev_{y}) = \hil_F(x,y),
\end{equation*}
when $x$ and $y$ are in the relative interior of the same extreme set $F$
of $\closure D$, and is infinity otherwise. Thus, the parts of the horofunction
boundary are just the relative interiors of the proper extreme sets of
$\closure D$.

\subsection{The horofunction boundary of the Funk geometry}

The horofunctions of the Funk geometry can be best understood by using
convex duality because they take a simpler form in the dual space.
Consider first the cone over $D$, in other words,
\begin{equation*}
C := \big\{ (p,1)\lambda \in \R^{n+1}
             \mid \text{$p\in D$ and $\lambda>0$}
     \big\}.
\end{equation*}
Here we are assuming that $D$ lives within $\R^n$.
We identify $D$ in the obvious way with a cross-section of this cone.
One may extend the function $\funk_D(\cdot,\cdot)$ from $D$ to $C$ in the
following way. For $x\in\R^{n+1}$ and $y\in C$, define the
\emph{gauge}\index{gauge}
\begin{equation*}
M(x/y;C) := \inf\big\{\lambda>0 \mid x\le_C \lambda y \big\}.
\end{equation*}
Here $\le_C$ is the partial ordering on $\R^{n+1}$ induced by $C$,
that is, $x\le_C y$ if $y-x\in\closure C$.
One can show \cite[page 29]{MR2953648}, 
that $\log M(x/y;C)=\funk_D(x,y)$, for all $x$ and $y$ in $D$.

\newcommand\dotprod[2]{\langle{#1},{#2}\rangle}
\newcommand\expf{j}

For each $z\in D$, the map $\distfn_z$ has the following extension to the whole
of $\mathbb{R}^{n+1}$:
\begin{equation*}
\distfn_z (x) :=
   \log \frac{M(x/z;C)}{M(b/z;C)},
\qquad\text{for all $x\in \mathbb{R}^{n+1}$}.
\end{equation*}

Define the (closed) dual cone of $C$:
\begin{align*}
C^*:=\{u\in\R^{n+1}\mid \text{$\dotprod{u}{x}\ge 0$ for all $x\in C$}\}.
\end{align*}
Here $\dotprod{\cdot}{\cdot}$ denotes the standard Euclidean inner product.
Observe that, by the Hahn--Banach separation theorem, $x\le_C y$
if and only if $\dotprod{u}{x}\le\dotprod{u}{y}$ for all $u\in C^*$.
It follows that
\begin{equation}
\label{eqn:M_as_ratio}
M(x/y;C) = \sup_{u\in C^*\backslash\{0\}}
              \frac{\langle u,x\rangle}{\langle u,y\rangle},
\qquad\text{for all $x\in\R^{n+1}$ and $y\in C$.}
\end{equation}
So, we see that $M(\cdot/z;C)$ is a convex function for fixed $z$.
Therefore, the same is true for the following function defined on $\R^{n+1}$:
\begin{equation*}
j_{C,z}(\cdot) := \frac{M(\cdot/z;C)}{M(b/z;C)} = \myexp\after\distfn_z(\cdot).
\end{equation*}

\newcommand\indicator{I}

Recall that the
\emph{Legendre--Fenchel transform}\index{Legendre--Fenchel transform}
of a convex function $f\colon \R^{n+1} \to \R\union\{\infty\}$ is the function
$f^*\colon \R^{n+1} \to \R\union\{\infty\}$ defined by
\begin{equation*}
f^*(y):= \sup_{x\in \R^{n+1}} \big( \dotprod{y}{x} - f(x) \big),
\qquad\text{for all $y\in \R^{n+1}$}.
\end{equation*}
The Legendre--Fenchel transform is a bijection from the set of
proper lower-semi\-continuous convex functions to itself, and is in fact a
homeomorphism in the Attouch--Wets topology.

Using~(\ref{eqn:M_as_ratio}), one may calculate without much difficulty the
Legendre--Fenchel transform of $j_{C,z}$.
We use $\indicator_E$ to denote the characteristic function of a set $E$,
that is, the function taking value $0$ on $E$ and $+\infty$ everywhere else.
For any open cone $U$ in $\R^{n+1}$ and any point $x\in U$, define
\begin{align*}
Z_{U,x} :=
   U^*\intersection\{u\in \R^{n+1} \mid \M{b}{x}{U} \dotprod{u}{x} \le 1 \}.
\end{align*}

\begin{proposition}
\label{prop:conjugates}
Let $C\subset \R^{n+1}$ be an open cone.
For all $z\in C$, we have that $\expf^*_{C,z}$ is the characteristic function
of the set $Z_{C,z}$
\end{proposition}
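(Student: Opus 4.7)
The plan is to set $p(x) := M(x/z;C)$ and $\alpha := M(b/z;C) > 0$, so that $j_{C,z} = p/\alpha$, compute the Legendre--Fenchel transform $p^*$ by hand, and then rescale. The starting observation is that $p$ is sublinear on $\R^{n+1}$: positive homogeneity is immediate from the definition of the gauge, and convexity follows from~(\ref{eqn:M_as_ratio}), which exhibits $p$ as a pointwise supremum of linear functionals $x \mapsto \dotprod{u}{x}/\dotprod{u}{z}$ for $u \in C^*\setminus\{0\}$. Sublinearity together with $p(0) = 0$ already gives $p^*(u) \ge 0$ for every $u$.

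I would then show $p^* = \indicator_K$, where $K := \{u \in C^* \mid \dotprod{u}{z} \le 1\}$, by splitting into three regions. If $u \notin C^*$, pick $y \in C$ with $\dotprod{u}{y} < 0$; since $\lambda z + y \in C \subset \closure C$ for every $\lambda > 0$, one has $p(-y) = 0$, so choosing $x = -ty$ drives $\dotprod{u}{x} - p(x) = -t\dotprod{u}{y} \to +\infty$. If $u \in C^*$ but $\dotprod{u}{z} > 1$, taking $x = tz$ yields $t(\dotprod{u}{z} - 1) \to +\infty$, since $p(tz) = t$. And if $u \in C^*$ with $\dotprod{u}{z} \le 1$, then for each $x$ with $p(x) < \infty$ and each $\epsilon > 0$ one can pick $\lambda \le p(x) + \epsilon$ with $\lambda z - x \in \closure C$, and applying $u \in C^*$ gives $\dotprod{u}{x} \le \lambda\dotprod{u}{z} \le p(x) + \epsilon$; letting $\epsilon \downarrow 0$ and combining with $p^*(u) \ge 0$ forces $p^*(u) = 0$.

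Finally I would invoke the elementary identity $(p/\alpha)^*(u) = \alpha^{-1}p^*(\alpha u)$, which is just a change of variable inside the supremum defining the transform, to conclude
\[ j^*_{C,z}(u) = \alpha^{-1}\indicator_K(\alpha u) = \indicator_{K/\alpha}(u), \]
and then check that $K/\alpha = \{u \in C^* \mid \alpha\dotprod{u}{z} \le 1\} = Z_{C,z}$, using that $C^*$ is a cone so $C^*/\alpha = C^*$. The main content is the computation of $p^*$; the one step that needs a little care is the first case above, where one must use that $C$ is open, so that $\lambda z + y \in C$ for all $y \in C$ and $\lambda > 0$, in order to conclude $p(-y) = 0$. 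Everything else is bookkeeping with the definitions of the gauge and the dual cone.
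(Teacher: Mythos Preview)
Your computation is correct and carries out exactly the ``calculation without much difficulty'' that the paper alludes to but does not write out; there is no proof in the paper to compare against beyond that hint. One small quibble: the inclusion $\lambda z + y \in C$ in your first case needs only that $C$ is a convex cone, not that it is open, so your closing remark misidentifies where care is required---but this is a comment on the commentary, not a defect in the argument itself.
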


So the set $Z_{C,z}$ on which $\expf^*_{C,z}$ is zero is the intersection of
$C^*$ with a half-space bounded by a hyperplane on which $\dotprod{\cdot}{z}$
is constant. This constant is the largest possible such that $Z_{C,z}$ lies
within $\{u\in\R^{n+1}\mid\dotprod{u}{b}\le 1\}$.
This is illustrated in Figure~\ref{fig:zeroset}.

\begin{figure}
\centering
\input{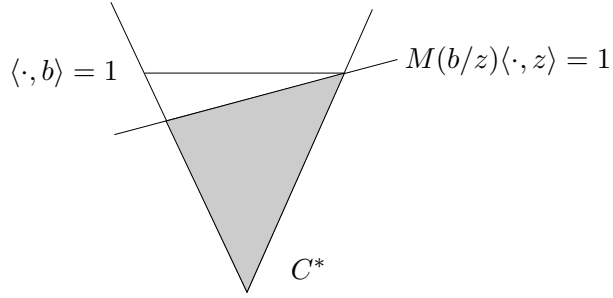}
\caption{The support of $\expf^*_{C,x}$}
\label{fig:zeroset}
\end{figure}

As $z$ approaches the boundary of $D$, the hyperplane becomes more and more
tilted. In the limit, the zero set will lie entirely within the (usual)
boundary of $C^*$.
We consider the limit of the zero sets in the Painlev\'e--Kuratowski topology.

\begin{proposition}
\label{prop:zero_set_converges}
A sequence $z_n$ in $D$ converges to a point in the horofunction boundary
of the Funk geometry if and only if $Z_{C,z_n}$ converges in the
Painlev\'e--Kuratowski topology.
\end{proposition}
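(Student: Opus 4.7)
The plan is to pass through convex duality. I would first relate convergence of $z_n$ in the horofunction boundary to Attouch--Wets convergence of the convex functions $\expf_{C,z_n}$; then, using that the Legendre--Fenchel transform is a homeomorphism in the Attouch--Wets topology (as noted earlier in the section), translate this into Attouch--Wets convergence of the conjugates $\expf^*_{C,z_n}=\indicator_{Z_{C,z_n}}$; and finally observe that Attouch--Wets convergence of indicator functions is Painlev\'e--Kuratowski convergence of the underlying sets.

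For the first equivalence, by definition $z_n$ converges in the Funk horofunction boundary if and only if $\distfn_{z_n}$ converges uniformly on bounded sets of $\symdist$ on $D$, and this sequence is equi-Lipschitzian with respect to $\symdist$. By Lemma~\ref{lem:function_converges}, this is the same as epigraph convergence. Since $\expf_{C,z_n} = \myexp\after\distfn_{z_n}$ on $D$ and the exponential is a monotone homeomorphism of $\R$, the epigraphs transform bicontinuously, giving Attouch--Wets convergence of $\expf_{C,z_n}$ on $D$. Exploiting that $\expf_{C,z_n}$ is positively homogeneous of degree one in its first argument and equals $+\infty$ off $\closure C$, one extends this to Attouch--Wets convergence of $\expf_{C,z_n}$ as proper lower-semicontinuous convex functions on all of $\R^{n+1}$. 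The Legendre--Fenchel transform then yields the equivalence with Attouch--Wets convergence of $\indicator_{Z_{C,z_n}}$; since the sublevel sets of an indicator function are either the whole set (for $\alpha\ge0$) or empty (for $\alpha<0$), the criterion of Proposition~\ref{prop:function_convergence} reduces to Painlev\'e--Kuratowski convergence of $Z_{C,z_n}$, completing the chain.

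The main obstacle I foresee is the first equivalence. The horofunction convergence controls $\distfn_{z_n}$ only on $D$ in the Hilbert metric, while Attouch--Wets convergence of $\expf_{C,z_n}$ is a statement about lsc convex functions on the entire ambient space $\R^{n+1}$, where they are infinite outside $\closure C$ and potentially badly behaved on the relative boundary of $C$. The transfer must rely on positive homogeneity and on the fact that a closed convex one-homogeneous function is determined by any cross-section of its effective domain; verifying that Attouch--Wets convergence is preserved under this correspondence is the delicate technical point, and I would expect to argue it using the machinery of \cite{beer_book}.
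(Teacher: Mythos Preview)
The paper does not give a proof of this proposition here; it is quoted from~\cite{walsh_hilbert}. However, the surrounding exposition---computing $\expf^*_{C,z}=\indicator_{Z_{C,z}}$ in Proposition~\ref{prop:conjugates} and remarking that the Legendre--Fenchel transform is an Attouch--Wets homeomorphism---is set up precisely so that your duality argument is the intended one, and it is correct in outline.

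One inaccuracy worth fixing: the functions $\expf_{C,z}$ are \emph{not} $+\infty$ off $\closure C$. By~(\ref{eqn:M_as_ratio}) they are finite sublinear functions on all of $\R^{n+1}$; in fact $\expf_{C,z}$ is the support function of $Z_{C,z}$. This actually makes the ``obstacle'' you flag easier than you suggest: sublinear functions are determined by their restriction to any neighbourhood of the origin, and for such functions pointwise convergence on $D$ (a bounded set with nonempty interior in the cross-section, hence spanning) already forces uniform convergence on compacta of $\R^{n+1}$, which is Attouch--Wets convergence for finite convex functions. So the passage from convergence of $\distfn_{z_n}$ on $D$ to Attouch--Wets convergence of $\expf_{C,z_n}$ on $\R^{n+1}$ goes through homogeneity as you say, but without any boundary delicacies. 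The remaining steps---Legendre--Fenchel continuity and the equivalence between Attouch--Wets convergence of indicators and Painlev\'e--Kuratowski convergence of the sets---are standard, exactly as you describe.
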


We have the following simple description of the Busemann points.

\begin{proposition}
\label{prop:funk_busemanns}
A function $\xi\colon D\to\R$ is a Busemann point of the Funk geometry
if and only if it is the restriction to $D$ of a function of the form
$\log\indicator^*_{Z_{E,x}}$, where $E$ is an open convex cone
such that $E^*$ is a proper extreme set of $C^*$, and $x\in E$.
\end{proposition}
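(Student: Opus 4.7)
The plan is to work throughout in the dual picture provided by Propositions~\ref{prop:conjugates} and~\ref{prop:zero_set_converges}. Since $\distfn_z = \log\after\expf_{C,z}$ and $\expf_{C,z}^{*}=\indicator_{Z_{C,z}}$, a sequence $z_n$ in $D$ converges to a Funk horofunction if and only if the sets $Z_{C,z_n}$ converge in the Painlev\'e--Kuratowski topology to some closed set $Z$, and in that case the horofunction is the restriction to $D$ of $\log\indicator_Z^{*}$. The task therefore reduces to identifying exactly which limit sets $Z$ arise from \emph{almost}-geodesics, rather than from arbitrary convergent sequences.

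For the ``if'' direction, I start from an open convex cone $E$ whose dual $E^{*}$ is a proper extreme set of $C^{*}$, together with a point $x\in E$, and produce an almost-geodesic converging to $\log\indicator_{Z_{E,x}}^{*}$. Straight line segments in $D$ are exact Funk geodesics when parametrised by Funk arc-length, so the natural candidate is the ray from $b$ through a cross-sectional representative of $x$, pushed out towards the boundary. One then verifies that along such a ray the tilting half-space $\{u\in\R^{n+1}\mid M(b/z_n;C)\,\dotprod{u}{z_n}\le 1\}$ degenerates to the half-space defining $Z_{E,x}$, so that $Z_{C,z_n}\to Z_{E,x}$ in the Painlev\'e--Kuratowski topology. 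This is a routine convex-analysis calculation comparing $M(\cdot/z_n;C)$ with $M(\cdot/x;E)$ as $z_n$ approaches the relative interior of the face dual to $E^{*}$.

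For the ``only if'' direction, suppose $\xi$ is Busemann and is realised as the limit of an almost-geodesic $\gamma\colon T\to D$. By Proposition~\ref{prop:zero_set_converges} the sets $Z_{C,\gamma(t)}$ converge to some $Z\subset\partial C^{*}$. The almost-geodesic inequality, rewritten through the identity $\distfn_z(y)=\log\bigl(M(y/z;C)/M(b/z;C)\bigr)$, becomes an asymptotic multiplicativity statement for the gauges $M(b/\gamma(t);C)$ along $\gamma$. I would use this rigidity to show, first, that the limiting supporting hyperplane exposes a whole extreme set $E^{*}$ of $C^{*}$ rather than a more general subset of $\partial C^{*}$, and second, that the direction in which $\gamma$ approaches the boundary singles out a distinguished ray in $E$, represented by a point $x\in E$. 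Combining these, $Z=Z_{E,x}$, which is the required form.

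The main obstacle is this rigidity step. A priori, the half-spaces bounding the $Z_{C,\gamma(t)}$ could tilt in such a way that the limit $Z$ is cut out of $\partial C^{*}$ by a hyperplane supporting $C^{*}$ at a set strictly smaller than a whole extreme set, or by a hyperplane supporting it at a configuration not of the form $Z_{E,x}$ for any open cone $E\supset\{x\}$. Ruling this out is where the almost-additivity condition $\bigl|\funk_D(\gamma(0),\gamma(s))+\funk_D(\gamma(s),\gamma(t))-t\bigr|<\epsilon$ is used decisively: reformulated multiplicatively, it forces the normalisations $M(b/\gamma(t);C)$ to behave coherently enough that both the extreme set $E^{*}$ and the representative $x\in E$ can be extracted in the limit. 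Everything else in the proof is bookkeeping via the Legendre--Fenchel/Painlev\'e--Kuratowski correspondence.
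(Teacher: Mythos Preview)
The paper does not actually prove this proposition: at the start of Section~\ref{sec:hilbert_horoboundary} it says that, unless stated otherwise, results come from~\cite{walsh_hilbert}, and Proposition~\ref{prop:funk_busemanns} is stated without proof. So there is no in-paper argument to compare your proposal against; what one can do is assess whether your sketch stands on its own.

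Your overall framework---pass to the dual via Propositions~\ref{prop:conjugates} and~\ref{prop:zero_set_converges}, identify horofunctions with limit zero-sets $Z$, and then single out the Busemann ones---is the right one, and is the same reduction used in~\cite{walsh_hilbert}. But both directions, as written, have real gaps.

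For the ``if'' direction, your ray ``from $b$ through a cross-sectional representative of $x$'' is ill-specified. Since $E^{*}\subset C^{*}$ is a \emph{proper} extreme set, dually $\closure C\subset\closure E$, so a point $x\in E$ need not lie in $C$ or even in $\closure C$; there is generally no cross-sectional representative of $x$ in $D$ to aim at. What is actually needed (and what~\cite{walsh_hilbert} uses, cf.\ the tangent-cone language in the proof of Theorem~\ref{compatibility}) is a path in $D$ approaching a point of the face of $\partial D$ dual to $E^{*}$, with a prescribed \emph{tangential} direction encoding $x$ inside the tangent cone. A single straight ray does not in general suffice; one typically builds an almost-geodesic by concatenation, moving first to the correct face and then within it, and the verification that $Z_{C,z_n}\to Z_{E,x}$ uses this two-scale structure.

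For the ``only if'' direction you explicitly flag the rigidity step as the main obstacle and then do not carry it out: the passage from ``asymptotic multiplicativity of $M(b/\gamma(t);C)$'' to ``$Z$ is cut out by a hyperplane supporting $C^{*}$ along a full extreme set'' is precisely the content of the proposition, and your paragraph asserts rather than proves it. The argument in~\cite{walsh_hilbert} goes through a characterisation of Busemann points via minimality (a Busemann horofunction is $1$-Lipschitz and minimal among such functions vanishing at $b$), and it is this extremality, translated through the Legendre--Fenchel transform, that forces $Z$ to have the form $Z_{E,x}$ with $E^{*}$ extreme. Without that or an equivalent device, the almost-geodesic inequality alone does not obviously yield extremality of the limiting support set.
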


It is clear from this proposition that in some cases there may be horofunctions
that are not Busemann. This will happen for instance when there is a subset
$F^*$ of $C^*$ that is not extreme but is the limit of extreme sets.
In this case, the function $\log\indicator^*_{Z_{F,b}}$ will be,
by Proposition~\ref{prop:zero_set_converges}, a horofunction,
but will not be, by Proposition~\ref{prop:funk_busemanns}, a Busemann point.

%

The next result shows that this phenomenon is the only way that
non-Busemann points may arise.

\begin{proposition}
\label{prop:funk_nonbusemann_criterion}
All horofunctions of the Funk geometry on a domain $D$ are Busemann
if and only if the set of extreme sets of the polar of $D$ is closed
in the Painlev\'e--Kuratowski topology.
\end{proposition}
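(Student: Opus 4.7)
The plan is to exploit the two preceding propositions together with the natural identification of extreme sets of $C^*$ with extreme sets of the polar $D^\circ$. Proposition~\ref{prop:zero_set_converges} tells us that the horofunctions are parametrised by PK-limits of the zero-sets $Z_{C,z_n}$, while Proposition~\ref{prop:funk_busemanns} tells us that such a limit yields a Busemann point exactly when it has the form $Z_{E,x}$ with $E^*$ a proper extreme set of $C^*$. So the statement essentially reduces to asking whether every PK-limit of the $Z_{C,z_n}$ has this special form, which in turn is controlled by the PK-behaviour of extreme sets of $C^*$ (equivalently of $D^\circ$, via the hyperplane slice $\langle u,b\rangle=1$).

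For the implication \emph{extreme sets PK-closed $\Rightarrow$ all horofunctions Busemann}, I would take a horofunction $\xi$ and a sequence $z_n\in D$ with $\distfn_{z_n}\to\xi$. By Proposition~\ref{prop:zero_set_converges}, $Z_{C,z_n}$ converges in PK to some closed set $A$; because $M(b/z_n;C)\to\infty$ as $z_n\to\partial D$, the defining hyperplanes tilt and $A$ lies in $\partial C^*$. I would show that $A$ necessarily has the form $Z_{F,x}$ where $F^*$ is a PK-limit of extreme sets of $C^*$ associated with the supporting faces at projections of $z_n$ onto $\partial D$. Under the hypothesis, $F^*$ is itself extreme, and Proposition~\ref{prop:funk_busemanns} then gives that $\xi=\log\indicator^*_A$ is Busemann.

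For the converse, I would argue by contrapositive. Suppose there is a sequence $F_n^*$ of extreme sets of $D^\circ$ PK-converging to a set $F^*$ which is not extreme. Passing to the cone picture, choose points $x_n$ with $F_n^*$ supporting $C^*$ at $x_n$, and pick $z_n\in D$ so that the tilted half-space $\{u:M(b/z_n;C)\langle u,z_n\rangle\le 1\}$ cuts $C^*$ in a set converging in PK to $Z_{F,x}$, where $x$ is an appropriate limit point of $x_n$ in the open cone $F$ over the interior of $F^*$. By Proposition~\ref{prop:zero_set_converges}, $\distfn_{z_n}$ converges to a horofunction $\xi=\log\indicator^*_{Z_{F,x}}$; but Proposition~\ref{prop:funk_busemanns} forbids this from being Busemann, since $F^*$ is not extreme.

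The main obstacle will be the second direction: one has to engineer the sequence $z_n\in D$ so that the moving half-spaces defining $Z_{C,z_n}$ not only tilt to approach $\partial C^*$ at the right rate, but so that their intersections with $C^*$ track the prescribed PK-convergence $F_n^*\to F^*$. Concretely, one needs a continuity-type statement saying that the map from points $z$ near $\partial D$ to the face of $C^*$ that $Z_{C,z}$ limits onto is sufficiently surjective onto PK-limits of extreme sets. The rest is bookkeeping: the identification of extreme sets of $C^*$ with those of $D^\circ$ via the slice $\langle u,b\rangle=1$ is routine, as is translating the choice of the interior point $x$ between the cone and its base.
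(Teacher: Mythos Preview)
Your overall strategy---reduce everything to Propositions~\ref{prop:zero_set_converges} and~\ref{prop:funk_busemanns} and track what happens to the zero-sets $Z_{C,z_n}$---is exactly the approach the paper takes. Note, however, that the paper does not prove this proposition in full here; the full argument is in~\cite{walsh_hilbert}. What the paper does give is the contrapositive direction, in the paragraph just before the statement, and comparing your sketch to that reveals that you have the relative difficulties reversed.

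For the direction ``extreme sets not PK-closed $\Rightarrow$ non-Busemann horofunction exists'', the paper simply takes the base-point $b$ as the interior point: if $F^*$ is a non-extreme PK-limit of extreme sets $F_n^*$, then each $\log\indicator^*_{Z_{F_n,b}}$ is already a Busemann point (Proposition~\ref{prop:funk_busemanns}), hence a horofunction; and since $Z_{F_n,b}=F_n^*\cap\{\langle u,b\rangle\le 1\}$ converges to $Z_{F,b}=F^*\cap\{\langle u,b\rangle\le 1\}$, the limit $\log\indicator^*_{Z_{F,b}}$ is a horofunction that is not Busemann. There is no need to ``engineer'' a sequence $z_n\in D$ whose tilted half-spaces track a moving target $Z_{F_n,x_n}$; the choice $x=b$ makes all the half-spaces the same fixed one. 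Your identified ``main obstacle'' is therefore an artefact of allowing a general $x$, and disappears with the paper's normalisation.

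The genuine work lies in the other direction, which you pass over quickly. Your assertion that the PK-limit $A$ of $Z_{C,z_n}$ ``necessarily has the form $Z_{F,x}$ where $F^*$ is a PK-limit of extreme sets of $C^*$'' is precisely the content to be established, and your reference to ``supporting faces at projections of $z_n$ onto $\partial D$'' is too vague to carry it: for $z_n$ in the interior there is no canonical projection, and even along a subsequence converging to a boundary point the relationship between that point, the face structure of $C^*$, and the limiting shape of $Z_{C,z_n}$ requires a real argument. You also implicitly use that the representation $\xi=\log\indicator^*_{Z_{E,x}}$ determines the extreme set $E^*$, so that a function of this form with $F^*$ non-extreme cannot coincide with one where $E^*$ is extreme; this injectivity is needed in both directions and should be stated. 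In short: right framework, but the difficulty is in the forward implication, not the converse.
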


Recall that the polar $D^\circ$ of $D$ may be identified with a cross-section
of $C^*$.

The detour metric in the Funk geometry was calculated
in~\cite{lemmens_walsh_polyhedral}.

\begin{proposition}
\label{prop:funk_detour}
Let $\xi=\log\indicator^*_{Z_{E,x}}$ and $\eta=\log\indicator^*_{Z_{F,y}}$
be two Busemann points of the Funk geometry. The distance between them
in the detour metric is
\begin{equation*}
\deltafunk(\xi,\eta) = \hil_{E}(x,y),
\end{equation*}
if the extreme sets $E^*$ and $F^*$ of $C^*$ are equal,
and is infinity otherwise.
\end{proposition}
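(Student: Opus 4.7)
The plan is to apply the almost-geodesic form of the detour cost,
$H(\xi, \eta) = \inf_\gamma \liminf_{t \to \sup T}(\funk(b, \gamma(t)) + \eta(\gamma(t)))$,
where the infimum is over paths converging to $\xi$. Because $\xi$ is Busemann, Proposition~\ref{prop:zero_on_busemann} tells us the infimum is realised in the limit by genuine almost-geodesics, not merely arbitrary paths. The detour metric $\deltafunk$ then arises by symmetrising $H$.

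The first step is to identify a canonical family of almost-geodesics converging to $\xi = \log\indicator^*_{Z_{E,x}}$. Using Proposition~\ref{prop:funk_busemanns} together with Proposition~\ref{prop:zero_set_converges}, one may take a straight line in $D$ heading to the boundary, parameterised so that $Z_{C,z_n}$ converges in Painlev\'e--Kuratowski to $Z_{E,x}$. Along such a path, $\funk(b, z_n) = \log M(b/z_n; C)$ admits a gauge expression that reflects the position of $x$ within the cone $E$.

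The second step is to evaluate $\eta(z_n)$ along this almost-geodesic. By Proposition~\ref{prop:conjugates} and the continuity of the Legendre--Fenchel transform in the Attouch--Wets topology, $\eta$ is the logarithm of the support functional $\indicator^*_{Z_{F,y}}(\cdot) = \sup_{u \in Z_{F,y}} \dotprod{u}{\cdot}$. When $E^* = F^*$, the dual directions $u$ attaining this supremum along $z_n$ lie inside the common extreme face, and their cancellation against $\log M(b/z_n; C)$ yields a finite limit expressible purely in terms of gauges on $E$. By analogy with the reverse-Funk formula of Proposition~\ref{prop:reverse_detour}, I expect $H(\xi, \eta) = \funk_E(x,y) + \text{(terms in $x$ and $y$ alone)}$, so that after symmetrising the base-point terms cancel and one obtains $\hil_E(x,y)$.

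When $E^* \ne F^*$, I would show that $H(\xi, \eta) = \infty$ or $H(\eta, \xi) = \infty$ by producing some $u \in Z_{F,y}$ that does not vanish on the extreme face supporting $\xi$; then $\dotprod{u}{z_n}$ has a linear contribution in $z_n$ that outgrows the logarithmic correction from $\funk(b, z_n)$ along any almost-geodesic converging to $\xi$. The principal obstacle is the finite case: one must carefully track how $\sup_{u \in Z_{F,y}} \dotprod{u}{z_n}$ balances against $M(b/z_n; C)$ along the chosen almost-geodesic. The cleanest route is via the Attouch--Wets continuity of the Legendre--Fenchel transform, which converts this delicate balance into a statement about Painlev\'e--Kuratowski convergence of support sets and so feeds directly into the machinery of Proposition~\ref{prop:zero_set_converges}.
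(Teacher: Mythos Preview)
The paper does not actually prove this proposition; it is quoted from \cite{lemmens_walsh_polyhedral}, so there is no in-text argument to compare against. Your outline follows the natural strategy---evaluate the detour cost along an almost-geodesic using the dual description of Funk horofunctions as logarithms of support functions---and this is essentially how the result is obtained in the cited reference, so the overall shape of your plan is sound.

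That said, one step of your plan is wrong as stated. In the case $E^* \ne F^*$ you claim that some $u \in Z_{F,y}$ makes $\dotprod{u}{z_n}$ ``have a linear contribution in $z_n$ that outgrows the logarithmic correction from $\funk(b,z_n)$''. But $z_n$ ranges over the bounded cross-section $D$, so $\dotprod{u}{z_n}$ is bounded and nothing grows linearly. The correct mechanism is the opposite one: when $F^*\subset E^*$, every $u\in Z_{F,y}$ lies in $E^*$ and hence $\dotprod{u}{z_n}\to 0$ at a rate comparable to $1/M(b/z_n;C)$, giving the cancellation you need; when $F^*\not\subset E^*$, there is some $u\in Z_{F,y}\setminus E^*$ for which $\dotprod{u}{z_n}$ stays bounded \emph{away from zero}, so $\eta(z_n)=\log\sup_{u\in Z_{F,y}}\dotprod{u}{z_n}$ remains bounded below while $\funk(b,z_n)\to+\infty$, and therefore $H(\xi,\eta)=+\infty$. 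You should also make the asymmetry explicit: by analogy with Proposition~\ref{prop:reverse_detour}, one of $H(\xi,\eta)$ and $H(\eta,\xi)$ can be finite while the other is infinite when one extreme set is properly contained in the other; the detour metric is infinite because it is their sum. Finally, in the finite case your sentence ``I expect $H(\xi,\eta)=\funk_E(x,y)+\text{(terms in $x$ and $y$ alone)}$'' is not yet a proof: the genuine work is to show that along the almost-geodesic the ratio $M(b/z_n;C)\cdot\sup_{u\in Z_{F,y}}\dotprod{u}{z_n}$ converges to a quantity expressible purely via gauges in $E$, and this is exactly where the Painlev\'e--Kuratowski convergence of $Z_{C,z_n}$ to $Z_{E,x}$ must be invoked carefully rather than gestured at.
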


Thus, there is a part for each extreme set of the polar of $D$.

\subsection{The horofunction boundary of the Hilbert geometry}

It is evident from the expression of the Hilbert metric as the symmetrisation
of the Funk metric that every horofunction in the Hilbert geometry is the
sum of one in the Funk geometry and one in the reverse-Funk geometry.
The following proposition shows that there is a unique such decomposition of
each Hilbert horofunction.

\begin{proposition}
\label{hilbert_convergence}
A sequence converges to a point in the Hilbert-geometry horofunction boundary
if and only if it converges to a horofunction in both the Funk and reverse-Funk
geometries.
\end{proposition}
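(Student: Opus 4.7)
My plan is to treat the two implications separately; $(\Leftarrow)$ is essentially routine and $(\Rightarrow)$ contains all the work.

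For $(\Leftarrow)$: Since $\hil = \funk + \rev$ on $D\times D$, we have the pointwise identity $\distfn_z(x) = \distfn_z^{\funk}(x) + \distfn_z^{\rev}(x)$ for every $z,x\in D$, where I use superscripts to distinguish the three geometries. Thus if $\distfn^{\funk}_{z_n}\to\xi_F$ and $\distfn^{\rev}_{z_n}\to\xi_R$ uniformly on Hilbert-bounded sets, with $\xi_F,\xi_R$ horofunctions of the respective geometries, then $\distfn^{\hil}_{z_n}\to\xi_F+\xi_R$ in the same sense. To check the limit is genuinely a Hilbert horofunction and not of the form $\distfn^{\hil}_w$ for some $w\in D$: if it were, the embedding property of $\distfn^{\hil}$ (which holds under assumptions I--III) would force $z_n\to w$ in the Hilbert topology, hence by continuity $\distfn^{\funk}_{z_n}\to\distfn^{\funk}_w$, contradicting that $\xi_F$ is a Funk horofunction.

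For $(\Rightarrow)$: Assume $\distfn^{\hil}_{z_n}\to\xi$. Since $\closure D$ is compact in the usual topology, and Proposition~\ref{pro:reversehorofunctions} identifies the reverse-Funk compactification with $\closure D$, every subsequence of $(z_n)$ has a sub-subsequence $(z_{n_k})$ converging in $\closure D$ to some $x$. Because $\hil(b,z_n)\to\infty$ (which follows from $\xi$ being a horofunction) and because the Hilbert topology on $D$ coincides with the usual one, $x$ must lie in $\partial D$. Proposition~\ref{pro:reversehorofunctions} then gives $\distfn^{\rev}_{z_{n_k}}\to\hrev_x$, and subtracting from $\distfn^{\hil}_{z_{n_k}}\to\xi$ yields $\distfn^{\funk}_{z_{n_k}}\to \xi-\hrev_x$, which is a Funk horofunction by the same embedding argument as above.

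It remains to upgrade this subsequential convergence to convergence of the full sequence, which amounts to showing that the boundary point $x$ extracted above is uniquely determined by $\xi$. I expect this to be the main obstacle. Suppose to the contrary that two subsequences give usual-topology limits $x_1\neq x_2$ in $\partial D$. Both $\xi_F^{(i)} := \xi - \hrev_{x_i}$ would then be Funk horofunctions, yielding the identity $\xi_F^{(1)} - \xi_F^{(2)} = \hrev_{x_2} - \hrev_{x_1}$. The plan is to contradict this using the explicit structural descriptions from Propositions~\ref{prop:conjugates}, \ref{prop:zero_set_converges}, and~\ref{prop:funk_busemanns}: every Funk Busemann point is the log-support-function of a set $Z_{E,y}$ anchored at an extreme face of the dual cone $C^*$, whereas reverse-Funk horofunctions have the essentially different $\log(|wp|/|wb|)$ cross-ratio form. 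Concretely, letting $p$ tend to $x_1$ along the chord from $x_2$ to $x_1$, one computes that $\hrev_{x_2}(p)\to+\infty$ while $\hrev_{x_1}(p)$ stays finite, so the right-hand side of the identity blows up; the left-hand side then forces $\xi_F^{(2)}$ to blow down at $x_1$ in a way that is incompatible with its prescribed form as a log-support-function on a boundary face of $C^*$. Alternatively, one could pass through Proposition~\ref{prop:convergence_of_balls} and argue that two different usual-topology limits of the centers $z_n$ would produce two different Painlev\'e--Kuratowski limits of the balls $B(z_n,\hil(b,z_n)+\alpha)$, contradicting convergence to the single set $\sublevel(\xi,\alpha)$.
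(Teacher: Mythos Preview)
The paper does not give a self-contained argument here; it simply cites the proof of \cite[Theorem~1.3]{walsh_hilbert}. So there is no detailed comparison to make, only an assessment of whether your sketch would stand on its own.

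Your $(\Leftarrow)$ direction is fine, and for $(\Rightarrow)$ you have correctly isolated the real issue: uniqueness of the usual-boundary limit $x$. However, the argument you outline for uniqueness has two genuine gaps.

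First, the chord from $x_2$ to $x_1$ need not meet $D$ at all: if $x_1$ and $x_2$ lie in the relative interior of the same proper face of $\closure D$, the entire segment lies in $\partial D$, so you cannot send $p\in D$ along it. This case must be handled separately, and it is not obviously easier.

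Second, even when the chord does pass through $D$, your claim that $\xi_F^{(2)}(p)\to-\infty$ is ``incompatible with its prescribed form as a log-support-function on a boundary face of $C^*$'' is not correct as stated. A Funk horofunction (Busemann or not) has the form $\log\sup_{u\in Z}\langle u,\cdot\rangle$ for some closed set $Z\subset\partial C^*$, and this \emph{does} tend to $-\infty$ as $p$ approaches any boundary point of $D$ annihilated by every $u\in Z$. Since $Z_2$ arises from a subsequence with $z_{n_k}\to x_2$, one knows only that $Z_2$ is contained in the exposed face of $C^*$ defined by $x_2$; nothing so far prevents every $u\in Z_2$ from also annihilating $x_1$. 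Ruling this out is precisely the substance of the result, and it needs the finer analysis carried out in \cite{walsh_hilbert} relating the location of $Z_2$ to the direction of approach of $z_{n_k}$. Your appeal to Proposition~\ref{prop:funk_busemanns} does not help either, since the Funk limit along an arbitrary convergent subsequence need not be a Busemann point.

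Your alternative via Proposition~\ref{prop:convergence_of_balls} is circular as written: knowing that the Hilbert balls converge tells you that the Hilbert sublevel sets are well-defined, but it does not by itself say that two different usual-topology limits of the centres would force two different ball limits---that is exactly what you are trying to prove.
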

\begin{proof}
This follows from the proof of~\cite[Theorem~1.3]{walsh_hilbert}.
\end{proof}

Note that, combined with Proposition~\ref{pro:reversehorofunctions},
this implies that every sequence that converges to a Hilbert
horofunction also converges to a point in the usual boundary.
This generalises a result of~\cite{karl_foertsch}, where is was shown that
every Hilbert-geometry geodesic converges to such a point.

We also have the following description of the Busemann points.

\begin{theorem}
\label{hilbert_busemanns}
Let $h=r_x + f$ be a Hilbert horofunction written as the sum of a reverse-Funk
horofunction $r_x$ and a Funk horofunction $f$. Then, $h$ is a Busemann point
if and only if $f$ is.
\end{theorem}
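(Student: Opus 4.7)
The plan is to use the detour-cost characterization of Busemann points (Proposition~\ref{prop:zero_on_busemann}) together with the decomposition
\[
\hil(b,z)+h(z) \;=\; \bigl[\funk(b,z)+f(z)\bigr]+\bigl[\rev(b,z)+r_{x}(z)\bigr],
\]
valid for every $z\in D$ and with both bracketed terms nonnegative (by the triangle inequality for the Funk and reverse-Funk metrics respectively). Since any Hilbert-convergent path is also Funk-convergent (Proposition~\ref{hilbert_convergence}), the inequality $\liminf(A_t+B_t)\ge\liminf A_t$ combined with the nesting of path families yields $H_{\hil}(h,h)\ge H_{F}(f,f)$. This handles the forward direction: $h$ Busemann forces $H_F(f,f)=0$, hence $f$ Busemann.

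For the converse, assume $f$ is Busemann. My plan is first to reduce the problem to finding a Funk almost-geodesic $\gamma_F$ to $f$ whose Euclidean accumulation point is exactly $x$; once such a $\gamma_F$ exists, its reparametrization by Hilbert arclength will be a Hilbert almost-geodesic to $h$. To see the latter, split the Hilbert triangle defect at $(b,\gamma_F(s),\gamma_F(t))$ into its Funk and reverse-Funk parts. The Funk part tends to zero by the Funk almost-geodesic hypothesis. The reverse-Funk part tends to zero because, as $\gamma_F(s),\gamma_F(t)\to x$ in the usual topology, continuity of $\rev$ on $D\times\closure D$ forces $\rev(b,\gamma_F(s))$ and $\rev(b,\gamma_F(t))$ to converge to $\rev(b,x)$ while $\rev(\gamma_F(s),\gamma_F(t))\to\rev(x,x)=0$, so the three terms telescope. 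Monotonicity of the Hilbert-arclength reparametrization follows because $\funk(b,\gamma_F(t))$ grows linearly in the Funk parameter while $\rev(b,\gamma_F(t))$ stays bounded.

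The main obstacle is to produce a Funk almost-geodesic for $f$ with the prescribed Euclidean accumulation at $x$. A given Funk almost-geodesic from the Busemann hypothesis might accumulate at another boundary point $x'$, which by Proposition~\ref{hilbert_convergence} would produce convergence to $h'=r_{x'}+f\ne h$ in Hilbert. To force $x$ to be an accumulation point I would exploit the assumption that $h=r_x+f$ is itself a Hilbert horofunction: Proposition~\ref{hilbert_convergence} supplies a sequence $w_m\to h$ in Hilbert with $w_m\to f$ in Funk and $w_m\to x$ in the usual topology, witnessing the compatibility of $x$ with $f$. Using the explicit description $f=\log\indicator^*_{Z_{E,x_0}}$ from Proposition~\ref{prop:funk_busemanns}, this compatibility translates into a geometric condition on the extreme set $E^{*}$ of $C^{*}$ relative to $x$; one can then splice a given Funk almost-geodesic with short straight segments directed toward $x$, using that straight segments are Funk geodesics to keep the almost-geodesic defect small, thereby producing the desired Funk almost-geodesic to $f$ that accumulates at $x$.
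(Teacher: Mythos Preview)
Your forward direction is fine: the inequality $H_{\hil}(h,h)\ge H_F(f,f)$ follows exactly as you say, from nonnegativity of the reverse-Funk bracket and the nesting of path families guaranteed by Proposition~\ref{hilbert_convergence}.

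The converse direction, however, has a genuine gap at the point where you claim $\rev(\gamma_F(s),\gamma_F(t))\to\rev(x,x)=0$. The continuity statement in the paper is only that $\rev(\cdot,\cdot)$ extends continuously to $D\times\closure D$; this lets you pass to the limit in the \emph{second} argument while the first stays in $D$, and is what justifies $\rev(b,\gamma_F(s))\to\rev(b,x)$. It does \emph{not} give joint continuity on $\closure D\times\closure D$, and indeed that joint continuity fails. For a concrete obstruction, take $D$ the open $2$-simplex and let $x$ be a vertex; one can choose sequences $p_n,q_n\to x$ in $D$ with $\rev(p_n,q_n)$ bounded away from~$0$ (approach the vertex along two rays with different slopes). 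So ``$\rev(\gamma_F(s),\gamma_F(t))\to 0$'' requires an argument specific to the path $\gamma_F$, not a continuity appeal, and as written there is none. Without this, the reverse-Funk triangle defect need not vanish and the reparametrised path need not be a Hilbert almost-geodesic.

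A second, softer gap is the ``splicing'' construction in your step~1: you need a Funk almost-geodesic to $f$ whose Euclidean limit is exactly $x$, and you only gesture at how to obtain one. The difficulty is real, since Funk almost-geodesics to the same $f$ can have different Euclidean limits, and interleaving straight segments toward $x$ may destroy the Funk almost-geodesic property unless those segments are compatible with the extreme set $E^*$ attached to $f$.

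For comparison, the paper does not argue via detour costs at all here: the theorem is imported from~\cite{walsh_hilbert}, where the proof proceeds through the explicit dual description of Funk and Hilbert Busemann points (Proposition~\ref{prop:funk_busemanns} and Theorem~\ref{compatibility}). That route sidesteps both issues by working directly with the extreme-set data rather than with almost-geodesic paths.
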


Combining this with Proposition~\ref{prop:funk_nonbusemann_criterion}, we
get the following.

\begin{corollary}
All horofunctions of the Hilbert geometry on a domain $D$ are Busemann
if and only if the set of extreme sets of the polar of $D$ is closed
in the Painlev\'e--Kuratowski topology.
\end{corollary}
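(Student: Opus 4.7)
The plan is to chain together Theorem~\ref{hilbert_busemanns} and Proposition~\ref{prop:funk_nonbusemann_criterion}. Proposition~\ref{prop:funk_nonbusemann_criterion} already equates the topological condition on the extreme sets of $D^\circ$ with the statement that every Funk horofunction is Busemann, so the task reduces to showing that the following are equivalent: (a) every Hilbert horofunction is Busemann; (b) every Funk horofunction is Busemann. Theorem~\ref{hilbert_busemanns} provides one half of this essentially for free, so I would spend most of the proof on the other half.

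For the direction (b)$\Rightarrow$(a), I would take an arbitrary Hilbert horofunction $h$ and use Proposition~\ref{hilbert_convergence} to write it as $h=r_x+f$ with $r_x$ a reverse-Funk horofunction (always Busemann, since straight-line segments are geodesics in the reverse-Funk geometry, as noted right after Proposition~\ref{pro:reversehorofunctions}) and $f$ a Funk horofunction. Assumption (b) says $f$ is Busemann, so Theorem~\ref{hilbert_busemanns} immediately gives that $h$ is Busemann.

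The (a)$\Rightarrow$(b) direction is the one that requires a small argument and is the main obstacle. Given a Funk horofunction $f$, I need to produce a Hilbert horofunction whose Funk part is $f$. Choose a sequence $(z_n)$ in $D$ converging to $f$ in the Funk horofunction compactification. Because $\closure D$ is compact in the usual topology, after passing to a subsequence I may assume $z_n\to x$ in the usual topology for some $x\in\closure D$. A quick check rules out $x\in D$: if it lay in $D$, then along that subsequence $\distfn_{z_n}$ would converge to $\distfn_x$, contradicting that $f$ is a horofunction (hence not in $\distfn(D)$). So $x\in\partial D$, and Proposition~\ref{pro:reversehorofunctions} gives $z_n\to r_x$ in the reverse-Funk geometry. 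By Proposition~\ref{hilbert_convergence}, $z_n$ converges in the Hilbert geometry to a horofunction $h$, and because Hilbert horofunctions are sums of the two one-sided limits, $h=r_x+f$. By assumption (a), $h$ is Busemann, and Theorem~\ref{hilbert_busemanns} then forces $f$ to be Busemann.

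Combining the two directions establishes the equivalence (a)$\Leftrightarrow$(b), and Proposition~\ref{prop:funk_nonbusemann_criterion} translates (b) into the stated closedness condition on the extreme sets of $D^\circ$. The only real subtlety is the small argument above that every Funk horofunction actually appears as the Funk component of some Hilbert horofunction; everything else is bookkeeping.
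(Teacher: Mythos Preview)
Your proposal is correct and follows exactly the approach the paper intends: the corollary is stated immediately after Theorem~\ref{hilbert_busemanns} with the one-line justification ``Combining this with Proposition~\ref{prop:funk_nonbusemann_criterion}, we get the following,'' and you have simply unpacked that combination. The only step the paper leaves implicit is your argument for (a)$\Rightarrow$(b)---that every Funk horofunction arises as the Funk component of some Hilbert horofunction---and your compactness-plus-subsequence argument using Propositions~\ref{pro:reversehorofunctions} and~\ref{hilbert_convergence} is the natural way to fill that gap.
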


It remains to determine which combinations of Funk and reverse-Funk
horofunctions give Hilbert horofunctions.
The answer is given in the next theorem.
Recall that every point in $\partial C$ is a supporting functional of $C^*$,
and so defines an exposed face of $C^*$.

\begin{theorem}
\label{compatibility}
Let $r_x$ be a reverse-Funk horofunction,
and let $f:=\log\indicator^*_{Z_{E,z}}$
be a Busemann point of the Funk geometry.
Then, the function $r_x + f$ is a Busemann point
of the Hilbert geometry if and only if the extreme set $E^*$ of $C^*$
is contained in the exposed face of $C^*$ defined by $x$.
\end{theorem}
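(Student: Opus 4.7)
The plan is to reduce the theorem to a statement about Painlev\'e--Kuratowski convergence of the dual zero sets combined with usual-topology limits. By Proposition~\ref{hilbert_convergence}, $r_x + f$ is a horofunction of the Hilbert geometry if and only if some sequence $z_n \in D$ converges to $r_x$ in the reverse-Funk geometry and to $f$ in the Funk geometry; by Proposition~\ref{pro:reversehorofunctions} the first condition says $z_n \to x$ in the usual topology, and by Proposition~\ref{prop:zero_set_converges} the second says $Z_{C, z_n} \to Z_{E, z}$ in the Painlev\'e--Kuratowski topology. By Theorem~\ref{hilbert_busemanns}, and because $f$ is assumed Busemann, being a horofunction is equivalent to being a Busemann point here. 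So the theorem reduces to showing: such a sequence $z_n$ exists if and only if $E^* \subset \{u \in C^* : \langle u, x \rangle = 0\}$.

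For the forward implication, pick such a sequence. Since $z_n \to x \in \partial D$, we have $M(b/z_n; C) \to \infty$. For any $u \in Z_{E, z}$, the lower Painlev\'e--Kuratowski inclusion gives $u_n \in Z_{C, z_n}$ with $u_n \to u$; each $u_n$ satisfies $0 \le \langle u_n, z_n \rangle \le 1 / M(b/z_n; C)$, so passing to the limit yields $\langle u, x \rangle = 0$. Since $Z_{E, z}$ is the intersection of $E^*$ with a half-space bounded by a hyperplane not passing through the origin, it contains a full relative neighborhood of $0$ inside $E^*$; by homogeneity, the linear condition $\langle \cdot, x \rangle = 0$ extends from $Z_{E, z}$ to the whole of $E^*$.

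For the backward implication, assume $\langle u, x \rangle = 0$ on $E^*$. The plan is to construct an explicit sequence $z_n \to x$ in $D$ along a direction aligned with $E$: for instance, set $z_n := (1 - t_n) x + t_n y_n$ where $y_n$ is taken in (the cross-section of) $E$ with $y_n \to z$ and $t_n \searrow 0$ at a suitable rate, so that $z_n \in D$ for $n$ large. For $u \in E^*$ the hypothesis gives $\langle u, z_n \rangle = t_n \langle u, y_n \rangle$, and controlling the asymptotics of $M(b/z_n; C)$ then shows $Z_{C, z_n} \to Z_{E, z}$.

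The main obstacle will be the upper Painlev\'e--Kuratowski inclusion $\pul Z_{C, z_n} \subset Z_{E, z}$ in the backward direction, that is, ruling out accumulation of points of $C^* \setminus E^*$ in the limit. This is where the extremality of $E^*$ in $C^*$ and the choice of approach direction enter decisively: the tilted hyperplanes defining $Z_{C, z_n}$ must sweep $C^* \setminus E^*$ off to infinity as $z_n \to x$ inside $E$. By contrast, the lower inclusion and the reduction to dual convergence are fairly direct once the preceding propositions are invoked.
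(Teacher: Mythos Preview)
Your reduction is sound: using Theorem~\ref{hilbert_busemanns} together with Proposition~\ref{hilbert_convergence}, the question is exactly whether there exists a sequence $z_n\in D$ with $z_n\to x$ in the usual topology and $Z_{C,z_n}\to Z_{E,z}$ in the Painlev\'e--Kuratowski topology. Your forward implication is also fine.

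However, the paper does \emph{not} argue this way. Its proof is a translation of~\cite[Theorem~1.1]{walsh_hilbert}, which characterises Hilbert Busemann points via iterated tangent cones, through two duality facts: the tangent cone $\tau(C,x)$ is dual to the exposed face of $C^*$ defined by $x$~\cite[Lemma~3.13]{walsh_hilbert}, and a cone $E$ arises from $C$ by taking tangent cones iff $E^*$ is an extreme set of $C^*$~\cite[Lemma~3.14]{walsh_hilbert}. The condition ``$E^*$ contained in the exposed face at $x$'' is then just the dual rewriting of the tangent-cone compatibility condition in that theorem. No explicit sequence is ever built.

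There is a genuine gap in your backward direction, and it is not merely the upper-limit bookkeeping you flag. Because $E^*$ is a proper extreme set of $C^*$, the cone $E=(E^*)^*$ \emph{contains} $\closure C$; in particular the point $z\in E$ and your approximants $y_n$ may lie well outside $D$. There is then no reason for the convex combination $(1-t_n)x+t_n y_n$ to land in $D$, so your sequence $z_n$ may not even be defined. What is actually needed is precisely the tangent-cone picture the paper cites: one must approach $x$ inside $D$ along directions lying in a sequence of nested tangent cones whose duals shrink down to $E^*$, and this is the content of~\cite[Theorem~1.1 and Lemma~3.14]{walsh_hilbert}. A self-contained argument along your lines would have to reprove those results; as written, the construction does not go through.
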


\begin{proof}
This is just a restatement of~\cite[Theorem~1.1]{walsh_hilbert}
using the description given above of the Funk Busemann-points in the dual space.
Indeed, by~\cite[Lemma~3.13]{walsh_hilbert}, the tangent cone $\tau(C,x)$
of $C$ at $x$ is dual to the exposed face of $C^*$ defined by $x$.
Also, by~\cite[Lemma~3.14]{walsh_hilbert}, an open cone $T$ can be obtained
from another $U$ by taking tangent cones if and only if $T^*$ is
an extreme set of $U^*$. The final ingredient is that the extreme sets
of an exposed face of a closed cone are exactly the extreme sets of the cone
that are contained within the exposed face.
\end{proof}

Using Propositions~\ref{prop:reverse_detour} and~\ref{prop:funk_detour},
one may find the detour metric of the Hilbert geometry.

\begin{theorem}[\cite{lemmens_walsh_polyhedral}]
\label{thm:hilbert_detour}
Let $\xi=r_p + \log\indicator^*_{Z_{E,x}}$ and
$\eta=r_q + \log\indicator^*_{Z_{F,y}}$
be two Busemann points of the Hilbert geometry. Then, the distance between them
in the detour metric of the Hilbert geometry is
\begin{equation}
\label{eqn:part_structure}
\deltahil(\xi,\eta) = \hil_G(p,q) + \hil_{E}(x,y),
\end{equation}
when $p$ and $q$ are in the relative interior of the same extreme set $G$
of $\closure D$, and the extreme sets $E^*$ and $F^*$ of $C^*$ are equal.
Otherwise, it is infinity.
\end{theorem}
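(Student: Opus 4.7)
The plan is to establish the splitting
\begin{align*}
H_{\hil}(\xi,\eta) = H_{\funk}(f,g) + H_{\rev}(r_p, r_q)
\end{align*}
whenever $\xi$ and $\eta$ are Hilbert Busemann points satisfying the compatibility conditions, where $f := \log\indicator^*_{Z_{E,x}}$ and $g := \log\indicator^*_{Z_{F,y}}$ are the Funk parts of $\xi$ and $\eta$. Symmetrizing this equality and then invoking Propositions \ref{prop:reverse_detour} and \ref{prop:funk_detour} yields the stated formula, and also handles the infinite-distance case.

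For the lower bound, let $\gamma$ be a path in $D$ converging to $\xi$ in the Hilbert metric. By Proposition \ref{hilbert_convergence}, $\gamma \to f$ in Funk and $\gamma \to r_p$ in reverse-Funk. Writing $d_{\hil}(b,\gamma(t)) + \eta(\gamma(t)) = A(t) + B(t)$ with $A(t) := \funk(b,\gamma(t)) + g(\gamma(t))$ and $B(t) := \rev(b,\gamma(t)) + r_q(\gamma(t))$, the inequality $\liminf(A+B) \geq \liminf A + \liminf B$ combined with the definitions of $H_{\funk}$ and $H_{\rev}$ gives $H_{\hil}(\xi,\eta) \geq H_{\funk}(f,g) + H_{\rev}(r_p, r_q)$. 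Symmetrizing, $\deltahil(\xi,\eta) \geq \deltafunk(f,g) + \deltarev(r_p, r_q)$. If either compatibility condition fails, Proposition \ref{prop:reverse_detour} or \ref{prop:funk_detour} makes the right-hand summand infinite, forcing $\deltahil = \infty$.

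For the matching upper bound under compatibility, I would take a Hilbert-almost-geodesic $\gamma$ starting at $b$ and converging to $\xi$, which exists because $\xi$ is Busemann. The key observation is that the Hilbert defect
\begin{align*}
\Delta_{\hil}(s,t) := d_{\hil}(b,\gamma(s)) + d_{\hil}(\gamma(s),\gamma(t)) - d_{\hil}(b,\gamma(t))
\end{align*}
splits as $\Delta_{\hil}(s,t) = \Delta_{\funk}(s,t) + \Delta_{\rev}(s,t)$, with both summands nonnegative by the triangle inequality in each non-reversible metric. Consequently, if the Hilbert defect is below $\epsilon$, so is each component defect, and $\gamma$ (after reparametrization by the relevant arc length) is simultaneously an almost-geodesic in Funk and in reverse-Funk, with the same Busemann limits $f$ and $r_p$ guaranteed by Proposition \ref{hilbert_convergence}. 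Using the fact, established in the framework of \cite{walsh_stretch}, that along an almost-geodesic to a Busemann target the liminf defining $H$ is attained as a genuine limit, I conclude that $A(t) \to H_{\funk}(f,g)$ and $B(t) \to H_{\rev}(r_p, r_q)$. Adding and taking the infimum over such $\gamma$ yields $H_{\hil}(\xi,\eta) \leq H_{\funk}(f,g) + H_{\rev}(r_p, r_q)$, which combined with the lower bound produces the desired equality.

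The main obstacle is the upgrade from liminf to honest limit for both component quantities $A$ and $B$ along the Hilbert-almost-geodesic; this is where the nonnegative defect-splitting does the real work, transferring the almost-geodesic structure cleanly from Hilbert to each component and letting the characterization of Busemann points via $H(\cdot,\cdot) = 0$ (Proposition \ref{prop:zero_on_busemann}) be applied in the Funk and reverse-Funk geometries separately.
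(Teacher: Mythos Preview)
Your approach is correct and matches what the paper indicates: the paper does not supply its own proof but simply states that the result follows by combining Propositions~\ref{prop:reverse_detour} and~\ref{prop:funk_detour}, citing~\cite{lemmens_walsh_polyhedral} for the details. Your argument makes this combination explicit by proving the additive splitting $H_{\hil}(\xi,\eta)=H_{\funk}(f,g)+H_{\rev}(r_p,r_q)$ via the nonnegative defect decomposition $\Delta_{\hil}=\Delta_{\funk}+\Delta_{\rev}$, which is exactly the mechanism used in~\cite{lemmens_walsh_polyhedral}. The one step that deserves a precise citation is the claim that along any almost-geodesic converging to a Busemann point $\xi$ the quantity $d(b,\gamma(t))+\eta(\gamma(t))$ actually converges to $H(\xi,\eta)$ (not merely has it as a lower bound); this is indeed established in the framework you cite, so there is no gap.
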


So, we see that each part of the horofunction boundary of a Hilbert
geometry is the direct product of two lower-dimensional Hilbert geometries
with the $\ell_1$-sum distance.

\section{Isometries of the Hilbert metric}
\label{sec:hilbert_isometries}

De la Harpe~\cite{delaharpe} was the first to consider the isometry group
of the Hilbert geometry. Let $\mathbb{P}^n=\mathbb{R}^n\cup\mathbb{P}^{n-1}$
be real $n$-dimensional projective space, and suppose that $D$ is contained
within the open cell $\mathbb{R}^n$ inside $\mathbb{P}^n$.
Let $\mathrm{Coll}(D)$ be the set of collineations\index{collineation},
that is, elements of $\PGL(n+1,\mathbb{R})$ preserving $D$.
As de la Harpe observed, each element of $\mathrm{Coll}(D)$ is an isometry
since collineations preserve the cross-ratios.

\subsection{Simplicial geometries}
However, not every isometry is a collineation, as we will now see.
Think of the $n$-simplex as being a cross-section
of the positive cone $\R_+^{n+1}$. Hilbert's projective metric on the interior
of this cone is given by
\begin{align*}
\hildist(x,y)
   = \log\max_i \frac{x_i}{y_i}
    +\log\max_i \frac{y_i}{x_i}.
\end{align*}
Define $V$ to be the $n$-dimensional linear space $\R^{n+1}/\sim$,
where $x\sim y$ if
$x=y +\alpha (1,1,\ldots,1)$ for some $\alpha \in\R$, and equip
$V$ with the \emph{variation norm}:
\begin{equation*}
\|x\|_{\mathrm{var}} := \max_i x_i - \min_i x_i. 
\end{equation*}
We see that the Hilbert metric on the simplex is isometric to the normed
space $V$ via the map
\begin{align*}
\log\colon \interior\R_+^{n+1}\to\R^{n+1}, \,(x_i)_i\mapsto(\log x_i)_i
\end{align*}
that takes logarithms coordinate-wise.
This isometry was first observed by Nussbaum~\cite{nussbaum:hilbert} and
de la Harpe~\cite{delaharpe}. It was shown by Foertsch and
Karlsson~\cite{karl_foertsch} that simplices are the only Hilbert geometries
isometric to normed spaces.
\begin{theorem}
\label{thm:normed}
A finite-dimensional Hilbert geometry on a bounded open convex set is isometric
to a normed space if and only if the domain is a simplex.
\end{theorem}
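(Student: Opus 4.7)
The forward implication is already established in the paragraphs preceding the statement: the coordinate-wise logarithm $(x_i)_i \mapsto (\log x_i)_i$ is an explicit isometry from the Hilbert metric on the open $n$-simplex to the variation-norm space $V = \R^{n+1}/\R(1,\ldots,1)$. So the task is the converse.

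Suppose $\phi\colon(D,\hildist)\to(V,\|\cdot\|)$ is an isometry onto a finite-dimensional normed space. The plan is to use the horofunction boundary, together with its detour metric and partition into parts, as an isometry invariant. By~(\ref{eqn:action_on_boundary}), the isometry $\phi$ extends canonically to a homeomorphism of horofunction compactifications, carries Busemann points to Busemann points, preserves the detour cost, and so induces a bijection between the parts of the Hilbert horofunction boundary of $D$ and of the normed space $V$ that is isometric on each part (with respect to the detour metric).

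I would then contrast the two descriptions of these parts. On the Hilbert side, Theorem~\ref{thm:hilbert_detour} identifies each part as an $\ell_1$-sum $(\interior G,\hildist_G)\oplus(\interior E,\hildist_E)$ of two lower-dimensional Hilbert geometries, indexed by an extreme set $G$ of $\closure D$ together with an extreme set $E^*$ of $C^*$ contained in the exposed face defined by a point of $G$. On the normed-space side, the Busemann analysis of normed spaces (in the spirit of the references to \cite{walsh_normed}) shows that parts are indexed by proper extreme sets of the dual unit ball of $V$ and are themselves isometric, in the detour metric, to lower-dimensional normed spaces associated to those faces. Matching the two structures forces every Hilbert factor appearing on the Hilbert side to be itself isometric to a normed space; by induction on dimension, each such factor is then a simplex, with the base case $\dim = 1$ being immediate since a bounded open interval with its Hilbert metric is isometric to $\R$.

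The main obstacle is passing from this recursive facial information to the global conclusion that $D$ itself is a simplex. The combinatorial constraint that the lattice of parts of the Hilbert horofunction boundary of $D$ match the face lattice of the dual unit ball of $V$, combined with the restriction that each factor be simplicial, should force the lattice of extreme sets of $\closure D$ to be that of a simplex; then matching the $\ell_1$-sum detour metrics on compatible parts pins down the affine structure and identifies $D$ projectively with a simplex. Carrying out this last combinatorial-cum-affine step, rather than the recursive reduction itself, is where I expect the bulk of the work to lie.
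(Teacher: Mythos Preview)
The paper does not actually prove Theorem~\ref{thm:normed}: it attributes the result to Foertsch and Karlsson~\cite{karl_foertsch} and moves on. So there is no ``paper's own proof'' to compare against here; what you are proposing is an alternative route to a result the paper merely quotes.

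For context, the Foertsch--Karlsson argument is quite different from yours. It does not use horofunctions at all; instead it exploits the geodesic structure directly, analysing when straight-line segments in $D$ are the \emph{unique} Hilbert geodesics between their endpoints and comparing this with the abundance of geodesics in a normed space. That approach is short and elementary, and does not need any of the machinery of Sections~\ref{sec:horoboundary}--\ref{sec:hilbert_horoboundary}.

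Your horofunction/detour-metric strategy is in the spirit of the paper and is a reasonable line of attack, but as you yourself flag, the real content is in the final step, and your sketch does not supply it. Two specific issues: (i) from the fact that a part on the Hilbert side, which by Theorem~\ref{thm:hilbert_detour} is an $\ell_1$-sum $\hil_G \oplus \hil_E$, is isometric to a normed space, you would need to extract that each factor separately is isometric to a normed space before the induction hypothesis applies --- this is plausible but not automatic; (ii) even granting that every proper face of $\closure D$ and of the polar is a simplex, concluding that $D$ itself is a simplex is a nontrivial combinatorial/convex-geometric statement that requires its own argument. Until those two points are nailed down, the proposal is an outline rather than a proof.
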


Using this correspondence, and the Masur--Ulam theorem, the isometry group
of the simplical Hilbert geometry was determined for $n=2$
in~\cite{delaharpe} and for arbitrary $n$ in~\cite{lemmens_walsh_polyhedral}.
Let $\sigma_{n+1}$ be the group of coordinate
permutations on $V$, let $\rho\colon V\to V$ be the isometry given by
$\rho(x)=-x$ for $x\in V$, and identify the group of translations in $V$
with $\mathbb{R}^n$.
\begin{theorem}[\cite{francaviglia_martino_isometry,lemmens_walsh_polyhedral}]
\label{thm:1.2}
If $D$ is an open $n$-simplex with $n\geq 2$, then
\[
\mathrm{Coll}(D) \cong \mathbb{R}^n\rtimes \sigma_{n+1}
\quad\mbox{and}\quad 
\mathrm{Isom}(D) \cong \mathbb{R}^n\rtimes \Gamma_{n+1}, 
\]
where $\Gamma_{n+1} =  \sigma_{n+1}\times\langle\rho\rangle$.
\end{theorem}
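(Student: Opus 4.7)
The plan is to invoke Theorem~\ref{thm:normed}, which identifies the simplicial Hilbert geometry with the normed space $(V,\|\cdot\|_{\mathrm{var}})$, and then apply the Mazur--Ulam theorem: every surjective isometry of a real normed space is affine. This reduces $\mathrm{Isom}(D)$ to the semidirect product $\mathbb{R}^n \rtimes L$, where the translation factor $\mathbb{R}^n \cong V$ is normal and $L$ is the group of linear isometries of $(V,\|\cdot\|_{\mathrm{var}})$.

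To identify $L$, I first show that the extreme points of the unit ball $B \subset V$ are precisely the classes $v_S := [\mathbf{1}_S]$ for $\emptyset \subsetneq S \subsetneq \{1,\ldots,n+1\}$, with $v_{S^c} = -v_S$: any $p \in \partial B$ with an intermediate coordinate value admits an obvious nontrivial affine decomposition. Any $A \in L$ permutes these extreme points, and a direct calculation gives
\[
\|v_S - v_T\|_{\mathrm{var}} =
\begin{cases}
1 & \text{if $S \subsetneq T$ or $T \subsetneq S$,}\\
2 & \text{otherwise.}
\end{cases}
\]
Hence the number of distance-$1$ neighbours of $v_S$ equals $2^{|S|} + 2^{n+1-|S|} - 4$, which for $n \ge 2$ takes a strictly larger value at $|S|\in\{1,n\}$ than at any intermediate size. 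It follows that $A$ preserves the set of singleton/cosingleton extreme points and, since $v_{S^c} = -v_S$, permutes the $n+1$ antipodal pairs $\{v_{\{i\}}, -v_{\{i\}}\}$, inducing a permutation $\pi \in \sigma_{n+1}$. Composing $A$ with the coordinate permutation $\pi^{-1}$, we may assume each such pair is fixed. Since $v_{\{j\}} + v_{\{k\}} = v_{\{j,k\}}$ is extreme but $v_{\{j\}} - v_{\{k\}}$ has variation norm~$2$, the map $A$ cannot send one $v_{\{i\}}$ to itself and another to its negative, forcing $A$ to act uniformly as $+I$ or $-I$ on the spanning family $\{v_{\{i\}}\}_i$. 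Composing with $\rho$ if necessary gives $A = I$, so $L = \sigma_{n+1} \times \langle \rho\rangle = \Gamma_{n+1}$.

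For $\mathrm{Coll}(D)$, each collineation lifts to a linear automorphism of $\mathbb{R}^{n+1}$ preserving $\mathbb{R}_+^{n+1}$, hence permuting its $n+1$ extreme rays; such a lift is a monomial matrix---the product of a coordinate permutation and a positive diagonal, taken modulo positive scaling. In log coordinates the diagonal factor becomes a translation of $V$ and the permutation is unchanged, yielding $\mathrm{Coll}(D) \cong \mathbb{R}^n \rtimes \sigma_{n+1}$. The extra generator $\rho$ corresponds to coordinatewise inversion $x \mapsto 1/x$ on $\mathbb{R}_+^{n+1}$, which is the Vinberg star map for the positive cone and is not projective linear; adjoining it to $\mathrm{Coll}(D)$ therefore produces all of $\mathrm{Isom}(D)$. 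The main obstacle is the combinatorial classification of linear isometries: the argument that every $A \in L$ permutes the singleton vertices (and hence comes from $\sigma_{n+1}$ up to a possible $\rho$) relies on the strict convexity of the neighbour-count function, which is precisely where the hypothesis $n \ge 2$ enters.
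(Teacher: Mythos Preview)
Your argument is correct and follows exactly the route the paper indicates (though the paper does not itself give a proof, citing \cite{lemmens_walsh_polyhedral} and \cite{francaviglia_martino_isometry} instead): transport to $(V,\|\cdot\|_{\mathrm{var}})$ via the coordinatewise logarithm, apply Mazur--Ulam, and then identify the linear isometry group combinatorially. Your extreme-point and neighbour-count analysis is right (note that for $n=2$ every extreme point is already singleton/cosingleton, so that step is vacuous there), and the $\mathrm{Coll}(D)$ computation via monomial matrices is the standard one. One small slip: what you actually need is the explicit $\log$ isometry described in the paragraph preceding Theorem~\ref{thm:normed}, not Theorem~\ref{thm:normed} itself, which is the Foertsch--Karlsson converse.
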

The same result was obtained independently
in~\cite{francaviglia_martino_isometry}, although the authors there were
not aware that the geometry they were considering is isometric to a normed
space. Consequently, their proof is somewhat longer.

So, we see that in the case of the simplex the isometry group differs from
the collineation group. Indeed, the latter is a subgroup of index two of the
former. Let us look in more detail at the map $\rho$, which corresponds on
$\interior\R_+^{n+1}$ to the coordinate-wise reciprocal map
\begin{align}
\label{eqn:recip}
\hat\rho \colon \interior\Rplus^{n+1}\to\interior\Rplus^{n+1},
   \,(x_i)_i\mapsto\Big(\frac{1}{x_i}\Big)_i.
\end{align}
The projective action of this map is an isometry but not a collineation.
This is shown in Figure~\ref{fig:flip} for the case of $n=2$.

\begin{figure}
\centering
\parbox{4.1cm}
   {\includegraphics[scale=0.8]{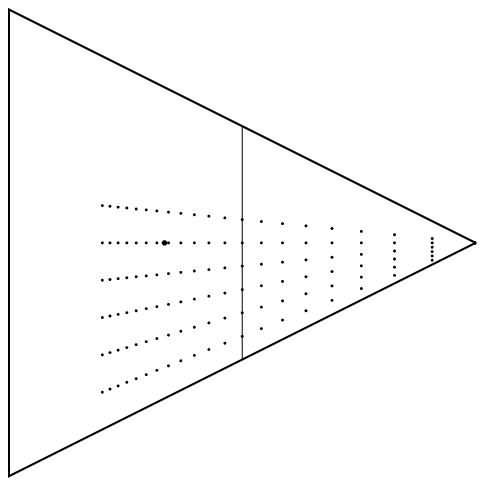}}
\qquad
\begin{minipage}{4.1cm}%
   \includegraphics[scale=0.8]{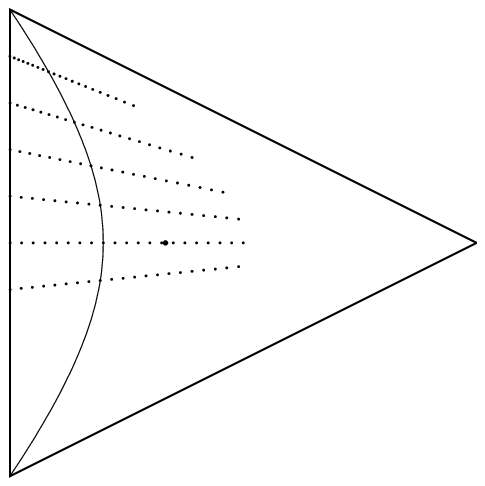}
\end{minipage}%
\caption{An isometry of the triangle that is not a collineation.
The solid straight line on the left is mapped to the curve on the right.} 
\label{fig:flip}
\end{figure}

As observed in~\cite{delaharpe}, the vertices are blown up to edges,
the edges are collapsed to vertices, and a straight line in the interior is
mapped to a straight line if and only if one of its ends approaches a vertex
of the triangle.

\subsection{Polyhedral geometries}

At the opposite extreme from the case of simplices, we have the following.
\begin{theorem}[\cite{delaharpe}]\label{thm:strictly_convex}
If the closure of $D$ is strictly convex, then
$\mathrm{Isom}(D) = \mathrm{Coll}(D)$.
\end{theorem}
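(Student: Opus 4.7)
The plan is to use the horofunction machinery of Section~\ref{sec:hilbert_horoboundary} to extend any isometry $g$ of $(D,\hildist)$ to a homeomorphism of $\closure D$, and then exploit the uniqueness of Hilbert geodesics in the strictly convex case to conclude, via a projective-geometric rigidity result, that $g$ is a collineation.

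For the extension to the boundary, I would first invoke (\ref{eqn:action_on_boundary}) to extend $g$ to a homeomorphism of the horofunction compactification. Since this extension preserves the detour metric, it permutes the parts of the Busemann boundary. In the strictly convex case, every proper extreme set of $\closure D$ is a singleton $\{x\}\subset \partial D$, so Theorem~\ref{thm:hilbert_detour} forces two Busemann points $\hrev_p + f$ and $\hrev_q + f'$ to share a part only when $p = q$. Each Hilbert part therefore has a well-defined reverse-Funk base in $\partial D$, and conversely each $x \in \partial D$ is the base of some Busemann point, by choosing via Theorem~\ref{compatibility} an extreme set $E^*$ of $C^*$ contained in the exposed face defined by $x$. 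The induced map $\bar g\colon \partial D\to\partial D$ is thus a bijection. Combining this with Proposition~\ref{hilbert_convergence} and the fact (Proposition~\ref{pro:reversehorofunctions}) that convergence to a Hilbert horofunction entails convergence in the usual topology on $\closure D$, one checks that $g$ together with $\bar g$ assembles into a self-homeomorphism of $\closure D$.

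For the conclusion, the key ingredient is the classical fact that in a strictly convex Hilbert geometry, straight line segments are the unique geodesics between their endpoints. Hence $g$ maps straight-chord segments in $D$ to straight-chord segments, and by continuity its extension sends chords of $\closure D$ to chords. A chord-preserving self-homeomorphism of a strictly convex compact convex body is then the restriction of an element of $\PGL(n+1,\R)$, by an appropriate form of the fundamental theorem of real projective geometry; the one-dimensional case is elementary since strictly convex domains in $\R$ are intervals. Therefore $g \in \coll(D)$.

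The main obstacle will be the extension step: verifying that $\bar g$ glues continuously to $g|_D$ requires careful bookkeeping of the convergence results for horofunctions. Strict convexity enters decisively here --- without it, Theorem~\ref{thm:hilbert_detour} would allow a single part to span the relative interior of a larger extreme face of $\closure D$, and ``the reverse-Funk base of a part'' would no longer be a single boundary point, so the argument producing $\bar g$ would collapse.
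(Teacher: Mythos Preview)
The paper itself does not prove Theorem~\ref{thm:strictly_convex}; it is simply quoted from~\cite{delaharpe}. De~la~Harpe's argument is essentially the second half of yours and does not use horofunctions at all: strict convexity makes straight-line segments the \emph{unique} Hilbert geodesics, so any isometry sends collinear triples in $D$ to collinear triples, and the fundamental theorem of projective geometry (applied directly on the open set $D$ for $n\ge 2$; the case $n=1$ is elementary) forces $g$ to be the restriction of a projective transformation. No passage to $\partial D$ is required before invoking rigidity.

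Your horofunction extension, by contrast, has a genuine gap. You correctly observe that in the strictly convex case every Hilbert part has a well-defined reverse-Funk base $x\in\partial D$, but to define $\bar g$ you need the isometry to \emph{respect} this fibration: two distinct parts with the same base $x$ must be sent to parts sharing a common base. Nothing in the detour metric records this---distinct parts are at detour distance $+\infty$ whether or not their bases coincide---so the part structure alone cannot produce $\bar g$. Concretely, if $\closure D$ is strictly convex but not $C^1$, a boundary point $x$ supports a higher-dimensional exposed face of $C^*$, hence several extreme sets $E^*$ and several parts over $x$. Your continuity paragraph does not close the gap either: it shows only that each subsequential Euclidean limit of $g(z_n)$ is the base of \emph{some} image horofunction, not that all such limits agree. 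The simplest repair is to discard the boundary step entirely and apply projective rigidity directly on $D$ from the segment-preserving property you already have.
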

It was also shown in the same paper that the only two-dimensional polyhedra
with non-collineation isometries are the triangles. This was generalised
to arbitrary dimension in the following theorem.
\begin{theorem}[\cite{lemmens_walsh_polyhedral}]
\label{thm:polyhedral}
If $(D,\hil)$ is a polyhedral Hilbert geometry, then
$\mathrm{Coll}(D)$ differs from $\mathrm{Isom}(D)$
if and only if $D$ is an open $n$-simplex, with $n\geq 2$.
\end{theorem}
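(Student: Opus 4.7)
The forward implication is immediate from Theorem~\ref{thm:1.2}, which exhibits the reciprocal map $\hat\rho$ as a non-collineation isometry of the open $n$-simplex for $n\geq 2$.

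For the reverse, assume $D$ is polyhedral and suppose $\phi\in\mathrm{Isom}(D)\setminus\mathrm{Coll}(D)$; the plan is to deduce that $D$ must be a simplex. First, extend $\phi$ via~\eqref{eqn:action_on_boundary} to a homeomorphism $\tilde\phi$ of the horofunction compactification. Because $\phi$ preserves $\hil$, the extension preserves the detour cost $H$, hence the detour metric $\deltahil$, and therefore permutes the parts of $X_B(\infty)$ and acts isometrically on each. By Theorem~\ref{thm:hilbert_detour}, the parts are indexed by pairs $(G,E^*)$, with $G$ a proper extreme set of $\closure D$ and $E^*$ a proper extreme set of $C^*$ subject to the compatibility of Theorem~\ref{compatibility}; the intrinsic metric on a part is the $\ell^1$-sum of two lower-dimensional Hilbert metrics, one on $\mathrm{relint}(G)$ and one on a cross-section of the dual cone $E$.

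The heart of the argument is to analyse how $\tilde\phi$ moves this combinatorial data. Any collineation $g$ acts diagonally on parts, sending $(G,E^*)$ to $(gG, g^{\#}E^*)$ with $g^{\#}$ the induced action on $D^\circ$, and restricts to a product isometry on each part. I would verify that any boundary isometry respecting this diagonal-plus-product structure must be induced by a collineation: the face-lattice data together with the per-face projective information should suffice to reconstruct the projective transformation on $D$. Since $\phi$ is not a collineation, $\tilde\phi$ must break the pattern, and this can only happen by mixing the two lattice coordinates---matching extreme sets of $\closure D$ with extreme sets of $C^*$ rather than merely permuting each kind among itself. For $\tilde\phi$ to remain a globally consistent homeomorphism, this mixing must fit together coherently across the whole boundary, producing a lattice anti-isomorphism between the face lattices of $\closure D$ and of $\closure(D^\circ)$ realized, face by face, as an isometry of the associated lower-dimensional Hilbert geometries.

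The main obstacle is then to show that a coherent duality of this sort forces $D$ to be a simplex. I would proceed by induction on $\dim D$: the duality restricts to each facet, giving a coherent self-dual matching in one lower dimension, so by induction every facet of $D$ is a simplex. Hence $D$ is simplicial, and its polar is also simplicial (equivalently, $D$ is simple), and it is a classical polyhedral fact that a polytope which is both simplicial and simple must be a simplex. The delicate step is checking that the induced duality genuinely restricts to facets with the required Hilbert-geometric compatibility, for which I would invoke the tangent-cone description used in the proof of Theorem~\ref{compatibility} to track how the compatibility between $G$ and $E^*$ transfers on passing to a subface.
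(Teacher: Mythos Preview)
Your overall strategy---extend an isometry to the horofunction boundary and study its action on the parts described in Theorem~\ref{thm:hilbert_detour}---matches the paper's sketch. The details, however, diverge from the actual argument in~\cite{lemmens_walsh_polyhedral}, and your version has a concrete gap.

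The paper does not work with the full lattice of parts $(G,E^*)$. It singles out only those parts in which one of the two $\ell_1$ factors in~\eqref{eqn:part_structure} is a single point: there is one such part for each vertex of $D$ and one for each facet. The key structural step is a \emph{dichotomy}: an isometry either sends vertex parts to vertex parts and facet parts to facet parts, or it interchanges the two families entirely. In the first case the isometry is shown to extend continuously to the Euclidean boundary $\partial D$ and hence to be a collineation; in the second case one argues directly that the interchange forces $D$ to be a simplex. Your proposal replaces this with a claim that a non-collineation isometry must ``mix the two lattice coordinates'' and thereby produce a lattice anti-isomorphism between the face lattices of $\closure D$ and $\closure(D^\circ)$, but you give no argument that the mixing must be global and coherent rather than partial---this is exactly the content of the dichotomy, and it needs proof.

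Your inductive conclusion also fails as written. You want to deduce that $D$ is both simplicial and simple and then invoke ``a classical polyhedral fact'' that such a polytope is a simplex. That fact holds only in dimension $n\ge 3$: every polygon is simultaneously simplicial and simple, so the base case $n=2$ is not covered by your scheme and requires the separate argument of de~la~Harpe. Moreover, even granting a coherent anti-isomorphism, its restriction to a facet $F$ lands in the vertex figure of $D^\circ$ at the dual vertex, which is only \emph{combinatorially} $F^\circ$; you would still need to show that the per-part Hilbert isometries descend to an isometry $\hil_F\to\hil_{F^\circ}$ of the intrinsic geometries, and you flag this yourself as the ``delicate step'' without indicating how to carry it out.
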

The proof involves studying the action of an isometry on the parts of the
horofunction boundary, in particular, on the parts where one or other of the
$\ell_1$ factors is trivial, that is,
consists of a single point.
There is one such part for every vertex on the polyhedron, and one for every
facet. It is shown that every isometry either maps vertex parts to vertex
parts and facet parts to facet parts, or it interchanges them. Isometries of
the former type are shown to extend continuously to the Euclidean boundary
of the domain and to be collineations, whereas isometries of the latter type
are shown to only exist on simplices.

The above theorem verifies, for the case of polyhedral Hilbert geometries,
some conjectures of de la Harpe, namely that the Hilbert isometry group
$\isom(D)$ is a Lie group,
and that its identity component coincides with that of $\coll(D)$.
Observe that $\coll(D)$ is naturally a Lie group since it is a closed subgroup
of $\PGL(n+1,\R)$.
A consequence of these conjectures is
that $\isom(D)$ acts
transitively on $D$ if and only if $\coll(D)$ does, the latter happening
exactly when $D$ is the cross-section of an homogeneous cone. The only
polyhedra that occur as cross-sections of such cones are the simplices.
De la Harpe enumerates all possible cross-sections of homogeneous cones
up to dimension four~\cite{delaharpe}.

\subsection{The general case}

Dropping the polyhedral assumption, can one find other domains
with isometries that are not collineations? Consider again the
reciprocal map $\hat\rho$ defined in~(\ref{eqn:recip})
on the interior of the $n+1$-dimensional positive cone.
One way of seeing that it is an isometry of Hilbert's projective metric is
to observe that it is order-reversing, homogeneous of degree $-1$, and
involutive, meaning that it is its own inverse.

\newcommand\conemap{g}

The following proposition then implies that $\hat\rho$ is
gauge-reversing\index{gauge-reversing map}, that is,
\begin{align*}
\gauge{\Rplus^{n+1}}{\hat\rho(x)}{\hat\rho(y)} = \gauge{\Rplus^{n+1}}{y}{x},
\qquad\text{for all $x,y\in\interior\R_+^{n+1}$}.
\end{align*}
\begin{proposition}
[\cite{noll_schaffer_orders_gauge,nussbaum:hilbert}]
\label{prop:gauge-reversing}
Let $\conemap\colon  C\to C'$ be a bijection between two open cones in a
finite-dimensional vector space. Then, $\conemap$ is gauge-reversing if and
only if it is order-reversing and homogeneous of degree $-1$ and its inverse
is order-reversing.
\end{proposition}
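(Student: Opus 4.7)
The plan is to prove the equivalence by handling its two directions separately, using only the elementary identities $M(\lambda x/x;C)=\lambda$ and $M(x/\lambda x;C)=\lambda^{-1}$ for $\lambda>0$, together with the equivalence of $u\le_C v$ with $M(u/v;C)\le 1$.

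For the forward direction, I would assume the gauge-reversing identity $M(g(x)/g(y);C')=M(y/x;C)$. If $x\le_C y$, then $M(x/y;C)\le 1$, so substituting $(y,x)$ for $(x,y)$ in the identity gives $M(g(y)/g(x);C')\le 1$, i.e.\ $g(y)\le_{C'} g(x)$; this establishes order-reversal of $g$. Applying the identity to the pairs $(\lambda x,x)$ and $(x,\lambda x)$ yields the two inequalities $g(\lambda x)\le_{C'}\lambda^{-1}g(x)$ and $\lambda^{-1}g(x)\le_{C'}g(\lambda x)$, so antisymmetry of $\le_{C'}$ forces $g(\lambda x)=\lambda^{-1}g(x)$, proving homogeneity of degree $-1$. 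Finally, rewriting the gauge-reversing identity with $u:=g(x)$, $v:=g(y)$ as $M(u/v;C')=M(g^{-1}(v)/g^{-1}(u);C)$ shows that $g^{-1}$ is itself gauge-reversing, so order-reversal of $g^{-1}$ is immediate by the same argument.

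For the converse, I would assume the three properties and fix $x,y\in C$. Set $\mu:=M(y/x;C)$. For any $\lambda>\mu$ one has $y\le_C\lambda x$ (the set $\{\lambda>0\mid \lambda x-y\in\overline{C}\}$ is closed above $\mu$), so by order-reversal $g(\lambda x)\le_{C'}g(y)$, and by homogeneity the left-hand side equals $\lambda^{-1}g(x)$, giving $g(x)\le_{C'}\lambda g(y)$ and hence $M(g(x)/g(y);C')\le\lambda$. Letting $\lambda\downarrow\mu$ yields $M(g(x)/g(y);C')\le M(y/x;C)$. For the reverse inequality, I would verify that $g^{-1}$ inherits all three hypotheses — it is order-reversing by assumption; it is homogeneous of degree $-1$, since rewriting $g(\lambda x)=\lambda^{-1}g(x)$ with $u:=g(x)$ gives $g^{-1}(\lambda^{-1}u)=\lambda g^{-1}(u)$; and its inverse $g$ is order-reversing — then apply the estimate just established to $g^{-1}$ at the arguments $u:=g(y)$, $v:=g(x)$, obtaining $M(y/x;C)\le M(g(x)/g(y);C')$.

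The main obstacle is essentially bookkeeping: one must keep careful track of which arguments get swapped when invoking the gauge-reversing identity, and check explicitly that every hypothesis transfers from $g$ to $g^{-1}$. The one genuinely delicate point is salience of the cones: without it, antisymmetry of $\le_{C'}$ fails and the homogeneity step in the forward direction would yield equality only modulo the lineality space of $C'$. In the Hilbert-geometry context where the proposition is applied, the cones are proper and pointed, so this is automatic and does not need to be stressed in the final write-up.
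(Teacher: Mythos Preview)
The paper does not actually supply a proof of this proposition; it is stated with attribution to Noll--Sch\"affer and Nussbaum and then used without further justification. So there is nothing in the paper to compare your argument against.

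That said, your argument is correct and is essentially the natural elementary proof. Both directions reduce to straightforward manipulations of the definition $M(x/y;C)=\inf\{\lambda>0\mid x\le_C\lambda y\}$ and the equivalence $u\le_C v\Longleftrightarrow M(u/v;C)\le 1$ (valid for $v$ in the open cone). You correctly pass the three hypotheses from $g$ to $g^{-1}$ in the converse direction, and you correctly isolate the one genuinely structural assumption: antisymmetry of $\le_{C'}$, which requires the closed cone $\overline{C'}$ to contain no line. In the setting of the paper the cones are proper, so this is automatic; your remark that it need not be emphasised in the write-up is appropriate.
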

It is clear that gauge-reversing bijections are isometries of Hilbert's
projective metric.

However, gauge-reversing maps do not just exist on the positive cone,
they exist on all
\emph{symmetric} cones\index{symmetric cone}\index{cone!symmetric},
of which the positive cone is an example.
Recall~\cite{faraut_koranyi} that a proper open cone $C$ in a
finite-dimensional real vector space is called \emph{symmetric}
if it is homogeneous, meaning that its group of linear automorphisms acts
transitively on it, and self dual, meaning that $C=C^\star$,
where $C^\star$ now denotes the \emph{open} dual.
One defines the \emph{characteristic function}\index{characteristic function}
$\phi$ on $C$ to be
\[
\phi(x) =\int_{C^\star} e^{-\langle y,x\rangle} \,dy,
\qquad\text{for all $x\in C$}.
\]
This map is homogeneous of degree minus the dimension of the cone,
which implies that
Vinberg's \emph{$*$-map}\index{Vinberg's $*$-map}\index{$*$-map},
\begin{align*}
C \to C^\star, \, x\mapsto x^* := -\nabla\log\phi(x),
\end{align*}
is homogeneous of degree $-1$.
It is known~\cite{kai_order_reversing} that on symmetric cones the
$*$-map is order-reversing.
In fact, it was shown in~\cite{kai_order_reversing} that this property of the
$*$-map characterises the symmetric cones among the homogeneous cones.
One may easily verify that the map $\hat\rho$ in~(\ref{eqn:recip})
is the $*$-map for the positive cone $\interior\R_+^{n+1}$.

\newcommand\herm{\operatorname{Herm}}
\newcommand\C{\mathbb{C}}
\newcommand\quaternions{\mathbb{H}}
\newcommand\octonions{\mathbb{O}}

The symmetric cones have been completely classified.
Each such cone is the product of one or more of the following irreducible
cones: the positive definite Hermitian matrices $\herm(n,E)$, with $n\ge 3$,
where the set of entries $E$ can be the reals $\R$, the complex numbers $\C$,
or the quaternions $\quaternions$; the  positive definite Hermitian $3\times 3$
matrices $\herm(3,\octonions)$ with octonion entries;
the \emph{Lorentz cone}\index{Lorentz cone}\index{cone!Lorentz}
(or ice-cream cone)\index{ice-cream cone}\index{cone!ice-cream}
\begin{align*}
\Lambda_n=\big\{(x_1,\ldots,x_n)\in \mathbb{R}^n
   \mid \mbox{$x_1>0$ and $x_1^2-x_2^2-\cdots -x_n^2>0$}\big\},
\end{align*}
for some $n\geq 2$.

For the reasons discussed above, the $*$-map on a symmetric cone is an
isometry of Hilbert's projective metric.
Its projective action is therefore an isometry
of the Hilbert geometry on a cross-section $D$ of the cone.
This isometry is not a collineation except when the symmetric cone
is a Lorentz cone.

It was conjectured in~\cite{lemmens_walsh_polyhedral}
that $\isom(D)$ and $\coll(D)$ differ if and only if the cone generated by $D$
is symmetric and not Lorentzian,
in which case the isometry group was conjectured to be generated by the
collineations and the isometry coming from the $*$-map.
This was known to be true for the cone of positive-definite Hermitian
matrices~\cite{molnar_thompson_isometries}, and has recently been established
for general symmetric cones~\cite{bosche}.

The conjecture has now been proved in
general~\cite{walsh_gauge}.
An important step in the proof is the following.

\begin{theorem}[\cite{walsh_gauge}]
\label{thm:symmetric}
An open convex cone in a finite-dimensional real vector space
admits a gauge-reversing map if and only if it is symmetric.
\end{theorem}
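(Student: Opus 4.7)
The plan is to prove the two directions separately.

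For the easy direction, suppose $C$ is symmetric. Vinberg's $*$-map sends $C$ to $C^\star = C$, is homogeneous of degree $-1$ because the characteristic function $\phi$ is homogeneous of degree $-\dim C$, is order-reversing by Kai's theorem, and is an involution on symmetric cones (as exhibited by the coordinate-wise reciprocal on the positive cone). Since its inverse then equals itself and is therefore order-reversing as well, Proposition \ref{prop:gauge-reversing} identifies the $*$-map as gauge-reversing.

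For the converse, suppose $g \colon C \to C$ is gauge-reversing; I would extract from $g$ both self-duality and homogeneity of $C$. The key preliminary observation is that the composition of two gauge-reversing maps is gauge-preserving, hence extends to a positively homogeneous order-preserving bijection of $C$, which must be a linear automorphism of the cone. In particular $g \circ g$ is linear, and more usefully, $g_1 \circ g_2$ is a linear automorphism of $C$ whenever $g_1,g_2$ are obtained by conjugating $g$ with collineations in $\coll(D)$. To obtain self-duality I would linearize $g$ around a fixed point $b \in C$ (adjusting $g$ by an automorphism of $C$ if necessary to produce such a $b$): exploiting the degree-$(-1)$ homogeneity together with the identity $M(g(x)/g(y);C) = M(y/x;C)$ along one-parameter families through $b$, the differential $dg_b$ should extend to a linear isomorphism from the ambient space to its dual that carries $C$ onto $C^*$. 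To obtain homogeneity I would show that the group generated by the linear automorphisms $g_1 \circ g_2$ acts transitively on $C$. Here the horofunction-boundary perspective of Section \ref{sec:hilbert_horoboundary} is essential: $g$ descends to an isometry of the Hilbert geometry on a cross-section $D$ of $C$, and by Theorem \ref{thm:hilbert_detour} its induced action must interchange the two $\ell_1$-factors of each boundary part, which strongly constrains and enriches the orbit structure, ultimately forcing transitivity.

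The main obstacle is the linearization step: extracting a linear polarity between $C$ and $C^*$ from a nonlinear order-reversing homogeneous bijection demands sharp infinitesimal control of $g$ at the fixed point, together with careful analysis of its behavior along the extremal rays of $C$ to assemble the local linear data into a global linear map. Establishing homogeneity is a secondary but genuine difficulty, since non-homogeneous self-dual cones exist, so transitivity cannot follow from duality alone and must be derived from the interaction of $g$ with the collineations via the part-structure of the horofunction boundary.
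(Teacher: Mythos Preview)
Your proposal inverts both the order of difficulty and the assignment of tools relative to the paper's argument, and this inversion creates a genuine gap.

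In the paper's sketch, \emph{homogeneity} is the easy half and does not use the horofunction boundary at all. One invokes Rademacher's theorem: a gauge-reversing map is Lipschitz for the symmetrised metric, hence differentiable almost everywhere, and at each point of differentiability the negative of the differential is a linear automorphism of $C$. This already furnishes enough automorphisms to prove transitivity. Your route via compositions $g_1\circ g_2$ of conjugates of $g$ by elements of $\coll(D)$ has a bootstrapping problem: a priori $\coll(D)$ may be trivial, so conjugation produces nothing beyond $g^2$, and invoking the part structure of Theorem~\ref{thm:hilbert_detour} does not by itself manufacture new linear automorphisms of $C$.

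Conversely, \emph{self-duality} is where the paper actually deploys the horofunction boundary, and not via linearisation at a fixed point. After arranging an involutive gauge-reversing $g$ fixing $b$, one observes that $g$ sends each singleton part of the Funk boundary (the logarithm of a linear functional, i.e.\ an extremal generator of $C^*$) to a singleton part of the reverse-Funk boundary (an $r_x$ with $x$ an extremal generator of $C$). This extremal-to-extremal correspondence is what defines the bilinear form $A(y,x)=\gauge{C}{x}{g(y)}$, first for extremal $x$ and then by extension. Your proposal to read off the polarity from $dg_b$ has two problems: Rademacher gives differentiability almost everywhere, not at the specific point $b$; and even granting differentiability, $dg_b$ is a linear map $V\to V$, not $V\to V^*$, so it does not directly yield a duality without an auxiliary inner product whose choice would have to be justified.

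The easy direction and the observation that compositions of gauge-reversing maps are gauge-preserving (hence linear by Theorem~\ref{thm:linear}) are fine and match the paper.
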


Establishing the homogeneity of such a cone admitting a gauge-reversing map
is not difficult. One uses Rademacher's theorem to get that the map is
differentiable almost everywhere. At each point of differentiability,
the negative of the differential is a linear automorphism of the cone.
This gives a large supply of automorphisms with which to prove homogeneity.

Proving the self-duality of the cone is more complicated. The first step
is to use arguments similar to those in the proof of homogeneity to show
there is a map $\conemap\colon C\to C$ that is gauge-reversing and involutive,
and fixes the base-point~$b$. One then wishes to define a positive-definite
bilinear form with respect to which $C$ is self-dual.

This is done by exploiting the fact that certain Funk horofunctions,
namely those associated to extreme rays of $C^*$, are the logarithms of linear
functionals. Each of these horofunctions is a singleton part and so gets
mapped by $\conemap$ to a singleton part of the reverse-Funk horofunction
boundary, that is, a reverse-Funk
horofunction of the form $r_x$, where $x$
is an extremal generator of $C$, in other words, a non-zero element of
an extremal ray of $\closure C$.

We now define
\begin{align*}
A(y,x) := \gauge{C}{x}{b} \myexp \after r_x \after \conemap(y)
       =  \gauge{C}{x}{\conemap(y)},
\end{align*}
for all $y\in C$ and extremal generators $x$ of $C$.
The equality holds because of the definition of $r_x$.
From what we have just seen, $\myexp \after r_x \after \conemap$ is the
restriction
to $C$ of a linear functional. This lets us extend the definition of $A$
to all $y$ in $V$ in such a way that it is linear in this variable.
Further work is then required to extend the definition to all $x$ in $V$
and verify that $A$ has the right properties.

One may also be interested in gauge-preserving
maps\index{gauge-preserving map}, that is, maps
$\conemap\colon C\to C$ satisfying
$\gauge{C}{\conemap(x)}{\conemap(y)} = \gauge{C}{x}{y}$,
for all $x$ and $y$ in $C$.
Such maps are also clearly isometries of Hilbert's projective metric.
The following proposition parallels Proposition~\ref{prop:gauge-reversing}.
\begin{proposition}
[\cite{noll_schaffer_orders_gauge,nussbaum:hilbert}]
\label{prop:gauge-preserving}
Let $\conemap\colon  C\to C'$ be a bijection between two open cones in a
finite-dimensional vector space. Then, $\conemap$ is gauge-preserving
if and only if it is order-preserving and homogeneous and its inverse is
order-preserving.
\end{proposition}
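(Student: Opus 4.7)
The plan is to argue in close analogy with the proof of Proposition~\ref{prop:gauge-reversing}, manipulating the infimum defining $M(\cdot/\cdot;C)$ directly. Throughout I will use that for $x,y$ in a proper open cone one has $0<M(x/y;C)<\infty$, and that the set of $\lambda>0$ with $\lambda y-x\in\closure C$ is closed from the right (so the infimum is achieved, i.e.\ $M(x/y;C)\cdot y-x\in\closure C$).

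For the forward direction, I would assume $\conemap$ is gauge-preserving and derive the three listed properties. For homogeneity, fix $x\in C$ and $\lambda>0$ and compute directly from the definition that $M(\lambda x/x;C)=\lambda$ and $M(x/\lambda x;C)=1/\lambda$. Gauge preservation then gives $M(\conemap(\lambda x)/\conemap(x);C')=\lambda$ and the reciprocal equation, and achievement of the infimum yields both $\conemap(\lambda x)\le_{C'}\lambda\conemap(x)$ and $\lambda\conemap(x)\le_{C'}\conemap(\lambda x)$, hence $\conemap(\lambda x)=\lambda\conemap(x)$. For order preservation, note that $x\le_C y$ with $x,y\in C$ gives $M(x/y;C)\le 1$, so $M(\conemap(x)/\conemap(y);C')\le 1$, and achievement of the infimum gives $\conemap(x)\le_{C'}\conemap(y)$. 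Since the gauge-preservation condition is symmetric in $\conemap$ and $\conemap^{-1}$, the same argument shows $\conemap^{-1}$ is order-preserving.

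For the reverse direction, assume the three properties and deduce gauge preservation. Fix $x,y\in C$ and let $\mu>M(x/y;C)$. Then $x\le_C\mu y$ with $\mu y\in C$, so order preservation yields $\conemap(x)\le_{C'}\conemap(\mu y)$, and homogeneity yields $\conemap(\mu y)=\mu\conemap(y)$. Hence $M(\conemap(x)/\conemap(y);C')\le\mu$, and letting $\mu\downarrow M(x/y;C)$ gives one inequality. Applying the same reasoning to $\conemap^{-1}$, which is order-preserving by hypothesis, homogeneous as the inverse of a homogeneous bijection, and whose own inverse $\conemap$ is order-preserving, yields the reverse inequality. Combining gives $M(\conemap(x)/\conemap(y);C')=M(x/y;C)$.

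The main delicate point is the interplay between the infimum defining $M$ and the weak order induced by $\closure C$; once one verifies that the infimum is attained and that it is strictly positive and finite on $C\times C$, each step reduces to a short manipulation. A secondary subtlety is that order preservation of $\conemap^{-1}$ is not automatic from order preservation of $\conemap$ (a bijective order-preserving map between partially ordered sets need not have an order-preserving inverse), which is precisely why the hypothesis lists it separately; this is the reason the argument needs to be applied symmetrically to $\conemap$ and $\conemap^{-1}$ in the reverse direction.
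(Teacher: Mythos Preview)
Your argument is correct. Note, however, that the paper does not actually supply its own proof of this proposition---it simply cites \cite{noll_schaffer_orders_gauge,nussbaum:hilbert} (and likewise for Proposition~\ref{prop:gauge-reversing}, so your opening appeal to ``the proof of Proposition~\ref{prop:gauge-reversing}'' refers to something not present in the paper). That said, your direct manipulation of the infimum defining $M(\cdot/\cdot;C)$ is the natural approach and is essentially what one finds in those references.

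One small remark on presentation: in the forward direction for order preservation, ``achievement of the infimum'' alone does not immediately yield $\conemap(x)\le_{C'}\conemap(y)$; you are implicitly also using that the set $\{\lambda>0:\lambda\,\conemap(y)-\conemap(x)\in\closure C'\}$ is an up-ray $[M,\infty)$, so that $M\le 1$ forces $1$ to belong to it. This is clear from the setup you have already established, but it would be worth stating explicitly.
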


These maps are well understood.

\begin{theorem}[\cite{noll_schaffer_orders_gauge,rothaus_order_isomorphisms}]
\label{thm:linear}
Let $C$ and $C'$ be open cones in a finite-dimensional vector space, and let
$\conemap\colon C\to C'$ be gauge-preserving bijection.
Then, $\conemap$ is the restriction to $C$ of a linear isomorphism.
\end{theorem}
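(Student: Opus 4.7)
By Proposition~\ref{prop:gauge-preserving}, the map $\conemap$ is positively homogeneous of degree~$1$, order-preserving, and has an order-preserving inverse. In particular, $\conemap$ is a Funk-metric isometry from $C$ onto $C'$, so the theorem reduces to showing additivity: once $\conemap(x+y) = \conemap(x) + \conemap(y)$ is established on $C$, linearity on the ambient vector space follows from positive homogeneity combined with the decomposition of the ambient space as $C - C$.

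My plan is to pass to the horofunction boundary of the Funk metric. From Proposition~\ref{prop:funk_busemanns}, the Funk Busemann points associated with one-dimensional extreme sets $E^*\subset\closure{C^*}$ are of the form $\log\langle u,\cdot\rangle$, up to an additive constant, where $u$ is a generator of such an extreme ray; this is a direct calculation from the definition of $\log\indicator^*_{Z_{E,x}}$ when $E$ is the open half-space dual to the ray. By Proposition~\ref{prop:funk_detour}, these Busemann points form singleton parts of the detour-metric boundary, distinguishable from higher-dimensional parts because the part indexed by $E^*$ is itself a lower-dimensional Hilbert geometry whose dimension is governed by $\dim E^*$. Since $\conemap$ is a Funk isometry, it extends to a homeomorphism of Funk horofunction boundaries that preserves the part structure and is an isometry of the detour metric between corresponding parts. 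In particular, $\conemap$ permutes the singleton parts and so yields a bijection between extreme rays of $\closure{C^*}$ and $\closure{(C')^*}$. Applying the action formula~(\ref{eqn:action_on_boundary}) and matching the resulting horofunctions, for each extreme generator $u$ of $\closure{C^*}$ there is a corresponding extreme generator $u'$ of $\closure{(C')^*}$ and a positive constant $c_u$ such that
\begin{equation*}
\langle u, y\rangle = c_u\, \langle u', \conemap(y)\rangle
\quad\text{for all } y\in C.
\end{equation*}

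Since the extreme generators of $\closure{(C')^*}$ span the ambient dual space by Krein--Milman applied to a compact base of the dual cone, the collection of these identities determines $\conemap$ as a single linear map, which must then be the restriction of the desired linear isomorphism. The main obstacle is the dimension-matching step: one must verify that parts indexed by extreme sets of differing dimensions cannot be isometrically identified under the detour metric, so that $\conemap$ really does pair up \emph{singleton} parts with singleton parts rather than with higher-dimensional parts. This rests on the fact from Proposition~\ref{prop:funk_detour} that each part is a Hilbert geometry of prescribed dimension, and Hilbert geometries of distinct dimensions are never isometric. There is also some bookkeeping required with the constants and the base-point normalization in the horofunction identifications, but this is routine once the structural correspondence between extreme rays is in place.
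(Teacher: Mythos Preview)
Your proof is correct and follows essentially the same route as the paper's: pass to the Funk horofunction boundary, observe that the singleton parts correspond to extreme rays of the closed dual cone, use the induced bijection on these rays to express $\conemap$ in a linear coordinate system, and conclude linearity. Your ``main obstacle'' is in fact no obstacle at all: since a Funk isometry maps Busemann points bijectively to Busemann points and preserves the detour metric, it carries each part bijectively onto a part, so singleton parts go to singleton parts automatically---no comparison of dimensions of Hilbert geometries is required, and the paper simply omits this non-issue (it also absorbs your constants $c_u$ by choosing $\conemap(b)=b'$ at the outset).
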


Here is a simple proof using horofunctions.

\begin{proof}
Choose the base-points $b\in C$ and $b'\in C'$ such that $\conemap(b)=b'$.
Since $\conemap$ is gauge-preserving, it preserves the Funk metric,
and therefore maps Funk horofunctions to Funk horofunctions according to
(\ref{eqn:action_on_boundary}).
In particular, singleton horofunctions are mapped to singleton horofunctions.
Recall that there is a singleton horofunction of the Funk geometry for
each extremal ray of the closed dual cone; in fact the horofunction is just
the logarithm of a suitably normalised element of the ray.
So for every extremal generator $x$ of $C^*$, there is an extremal generator
$x'$ of $(C')^*$ satisfying $\dotprod{x}{y} = \dotprod{x'}{\conemap(y)}$,
for all $y\in C$.
However, by choosing the right number of extremal generators of $C^*$
we obtain a basis for the dual space, that is, a linear coordinate system.
It follows that $\conemap$ is the restriction to $C$ of a linear map.
\end{proof}

So the projective action of every gauge-preserving map is a collineation.
The conjecture referred to above now follows from 
combining Theorems~\ref{thm:symmetric} and~\ref{thm:linear} with the following
fact proved in~\cite[Theorem 1.3]{walsh_gauge}:
every isometry of a Hilbert geometry arises as the projective
action of either a gauge-preserving map or a gauge-reversing map.

\bibliographystyle{abbrv}
\bibliography{bibliohilbert.bib}


\printindex

\end{document}